\def\titlerunning#1{\gdef\titrun{#1}}
\def\author#1{\gdef\autrun{\def\and{\unskip, }#1}\gdef\@author{#1}}
\def\keywords#1{\par\medskip
\noindent\textbf{Keywords.} #1}
\def\subjclass#1{\par\smallskip
\noindent\textbf{MSC (2010):} #1}
\newtheorem{thm}{Theorem}[section]
\newtheorem{cor}[thm]{Corollary}
\newtheorem{lem}[thm]{Lemma}
\theoremstyle{definition}
\newtheorem{defin}[thm]{Definition}
\newtheorem{rem}[thm]{Remark}
\numberwithin{equation}{section}
\newtheorem*{notations}{Notations}
\begin{document}

\baselineskip=17pt

\titlerunning{Radial elliptic problems }

\title{Radial nonlinear elliptic problems with singular or vanishing potentials}

\author{
Marino Badiale\thanks{Partially supported by the PRIN2012 grant ``Variational and perturbative aspects of nonlinear differential problems''},
\ \ Federica Zaccagni }

\date{
\begin{footnotesize}
\emph{
Dipartimento di Matematica ``Giuseppe Peano'' \smallskip\\
Universit\`{a} degli Studi di Torino, Via Carlo Alberto 10, 10123 Torino, Italy \\
e-mail:} marino.badiale@unito.it 
\end{footnotesize}
}
\maketitle

\begin{abstract}
In this paper we prove existence of radial solutions for the nonlinear elliptic problem

\[
-\mathrm{div}(A(|x|)\nabla u)+V(|x|)u=K(|x|)f(u) \quad \text{in }\mathbb{R}^{N}, 
\]

\noindent with suitable hypotheses on the radial potentials $A,V,K$. We first get compact embeddings of radial weighted Sobolev spaces into 
sum of weighted Lebesgue spaces, and then we apply standard variational techniques to get existence results.

\keywords{Nonlinear elliptic equations, weighted Sobolev spaces, compact embeddings, unbounded or decaying potentials}
\subjclass{Primary 35J60; Secondary 35J20, 46E35, 46E30}
\end{abstract}

\section{Introduction}

In this paper we will study the following non linear elliptic equation 
\begin{equation}
-\mathrm{div}(A(|x|)\nabla u)+V(|x|)u=K(|x|)f(u) \quad \text{in }\mathbb{R}^{N},  \label{EQ}
\end{equation}
where $N\geq 3$, $f:\mathbb{R}\rightarrow \mathbb{R}$ is a continuous nonlinearity
satisfying $f\left( 0\right) =0$ and $V\geq 0, A, K>0$ are given radial potentials. 
When $A=1$ the differential operator is the usual laplacian, and this kind of problems 
have been much studied in last years, with different sets of hypotheses on the nonlinearity $f$ and the 
potentials $V,K$. Much work has been devoted in particular to problems in which such potentials can be vanishing or divergent at 
$0$ and $\infty$, because this prevents the use of standard embeddings between Sobolev spaces of radial functions, and new embedding and compactness results must be proved (see for example \cite{Alves-Souto-13}, \cite{Ambr-Fel-Malch}, \cite{BGR}, \cite{BGRnonex}, \cite{BR}, 
\cite{BRpow}, \cite{Be-Gr-Mic}, \cite{Be-Gr-Mic-2}, \cite{BonMerc11}, \cite{Bon-VanSchaft-10}, \cite{SuTian12}, \cite{Su-Wang-Will-2}, \cite{Su-Wang-Will-p}, and the references therein). The case in which the potential $A$ is not trivial has been studied in \cite{Su-Wang-Will-q}, \cite{ChenChen-X}, \cite{Huang-X} for the p-laplacian equation, in \cite{DengHuang1} and \cite{DengHuang2} for bounded domains, and in \cite{ZSW} for exterior domains.
The typical result obtained in these works says, roughly speaking, that given suitable asymptotic behavior at $0$ and $\infty$ for the potentials, there 
is a suitable range of exponent $q$ such that, if $f$ behaves like the power $t^{q-1}$, then problem (\ref{EQ}) has a radial solution. 
\par \noindent In this paper we study problem (\ref{EQ}) using the ideas introduced in \cite{BGRcomp}, \cite{BGRnext}, \cite{BGRlast}. The main novelty of this approach is that the  nonlinearity $f$ is not a pure power as before, but has different power-like behaviors at zero and infinity. The typical example is $f(t)= \min \{t^{q_1-1}, t^{q_2 -1}\}$. Also, we do not introduce hypotheses on the asymptotic behavior of $V,K$, but on their ratio. The typical result is the existence of two intervals $\mathcal{I}_{1} , \mathcal{I}_{2}$ such that if $q_1 \in \mathcal{I}_{1}$ and $q_2 \in \mathcal{I}_{2}$, and $f$ as above, then problem (\ref{EQ}) has a radial solution. When $\mathcal{I}_{1} \cap \mathcal{I}_{2} \not= \emptyset$, it is possible to choose $q_1=q_2 =q \in \mathcal{I}_{1} \cap \mathcal{I}_{2}$, so that $f(t)= t^{q-1}$ and we get results similar to those already known in literature. The main technical device for our results is given by compact embeddings of Sobolev spaces of radial functions in sum of Lebesgue spaces. We refer to \cite{BPR} for an introduction to sum of Lebesgue spaces and to the main results we shall use in this paper.
\par \noindent The paper is organized as follows: after the introduction, in section 2 we introduce the main function spaces we shall use, and prove some preliminary embedding results. In section 3 we introduce some sufficient conditions for compactness of the embeddings, and in section 4 we prove compactness. In section 5 we apply the previous results to get existence and multiplicity results for (\ref{EQ}). Finally, in section 6 we give some concrete examples that, we hope, could help the reader to understand what is new in our results. Notice that the main hypotheses of our results are introduced at the beginning of section 2, while the main results for (\ref{EQ}) are theorems \ref{theorboh3} and \ref{theorboh4}.

\begin{notations}
We end this introductory section by collecting
some notations used in the paper.

\noindent $\bullet $ For every $R>0$, we set $B_{R}:=\left\{ x\in \mathbb{R}%
^{N}:\left| x\right| <R\right\} $.

\noindent $\bullet $ $\omega_N$ is the $(N-1)$-dimensional measure of the surface $\partial B_1 = \left\{ x\in \mathbb{R}%
^{N}:\left| x\right| =1 \right\} $.

\noindent $\bullet $ For any subset $A\subseteq \mathbb{R}^{N}$, we denote $%
A^{c}:=\mathbb{R}^{N}\setminus A$. If $A$ is Lebesgue measurable, $\left|
A\right| $ stands for its measure.

\noindent $\bullet $ By $\rightarrow $ and $\rightharpoonup $ we
respectively mean \emph{strong} and \emph{weak }convergence.

\noindent $\bullet $ $\hookrightarrow $ denotes \emph{continuous} embeddings.

\noindent $\bullet $ $C_{c}^{\infty }(\Omega )$ is the space of the
infinitely differentiable real functions with compact support in the open
set $\Omega \subseteq \mathbb{R}^{N}$; $C_{c,r}^{\infty }(\mathbb{R}%
^{N})$ is the radial subspace of $C_{c}^{\infty }(\mathbb{R}^{N})$. If $B$ is a ball with center 
in $0$, $C_{c,r}^{\infty }(B)$  is the radial subspace of $C_{c}^{\infty }(B )$. 

\noindent $\bullet $ If $1\leq p\leq \infty $ then $L^{p}(A)$ and $L_{%
\mathrm{loc}}^{p}(A)$ are the usual real Lebesgue spaces (for any measurable
set $A\subseteq \mathbb{R}^{N}$). If $\rho :A\rightarrow \left( 0,+\infty
\right) $ is a measurable function, then $L^{p}(A,\rho \left( z\right) dz)$
is the real Lebesgue space with respect to the measure $\rho \left( z\right)
dz$ ($dz$ stands for the Lebesgue measure on $\mathbb{R}^{N}$).

\end{notations}

\bigskip
\bigskip

\section{Hypotheses and pointwise estimates}

Assume $N\geq3$. Let $V$, $K$ and $A$ be three potentials satisfying the following hypothesis:

\begin{itemize}

\item[{[A]}] $A:(0,+\infty)\rightarrow(0,+\infty)$ is a continuous function such that there exist real numbers $2-N<a_{0},a_{\infty}\leq2$ and $c_{0},c_{\infty}>0$ satisfying:
$$
c_{0} \leq \liminf_{r\rightarrow 0^+}\frac{A(r)}{r^{a_{0}}} \leq \limsup_{r\rightarrow 0^+}\frac{A(r)}{r^{a_{0}}} < + \infty 
$$
$$
c_{\infty}\leq \liminf_{r\rightarrow +\infty}\frac{A(r)}{r^{a_{\infty}}} \leq \limsup_{r\rightarrow +\infty}\frac{A(r)}{r^{a_{\infty}}} <+\infty .
$$

\item[{[V]}] $V:(0,+\infty)\rightarrow[0,+\infty)$ belongs to  $ L_{loc}^{1}(0,+\infty).$

\item[{[K]}] $K:(0,+\infty)\rightarrow(0,+\infty)$ belongs to $ L_{loc}^{s}(0,+\infty)$ for some $s>\max\left\{ \frac{2N}{N-a_{0}+2},\;\frac{2N}{N-a_{\infty}+2}\right\} .$

\end{itemize}

\bigskip
\bigskip

\noindent For any $q>1$, we define the weighted Lebesgue space
$L_{K}^{q} = L^{q}(\mathbb{R}^{N}, K(|x|)dx)$ whose norm is $||u||_{L_{K}^{q}}  = \left( \int_{\mathbb{R}^{N}} K(|x|) |u|^{q}dx \right)^{1/q}.$

\bigskip

\begin{defin} For $q_1 , q_2 >1$ we define the sum space ${\cal L}_K = L_{K}^{q_1} + L_{K}^{q_2} $ as

$$ {\cal L}_K = L_{K}^{q_1} + L_{K}^{q_2} = \left\{ u= u_1 + u_2 \,\, \big| \,\, u_i \in L_{K}^{q_i} \right\}  ,$$

\noindent with norm $||u||_{{\cal L}_K }= \inf \left\{ \max\{ ||u_1||_{L_{K}^{q_1}} , ||u_2||_{L_{K}^{q_2}} \}   \, \Big| \, u=u_1 + u_2 , \, u_i \in L_{K}^{q_i}  \right\}$.

\end{defin}

 \noindent By $ L^{q_1} + L^{q_2}$ we mean the sum space obtained when $K \equiv 1$, that is, when the $L_{K}^{q_i}$'s are the usual Lebesgue spaces. We refer to \cite{BPR} for a treatment of such spaces.

\bigskip

\noindent We are now going to prove some pointwise estimates for functions in $C_{c,r}^{\infty }(\mathbb{R}^{N})$, which are the starting point of our arguments. In all this paper, when dealing with a radial function $u$, we will often write, with a little abuse of notation,
$u(x)= u(|x|) =u(r)$ for $|x|=r$.

\bigskip

\begin{rem}
\label{remA}
It is easy to check that the hypothesis $[A]$ implies that, for each $R>0$, there exist $C_{0}=C_{0}(R)>0$ and $C_{\infty}=C_{\infty}(R)>0$ such that 
\begin{equation}
A(|x|)\geq C_{0}|x|^{a_{0}} \quad   \mbox{for all} \,\, 0<|x|\leq R,
\end{equation}
\begin{equation}
A(|x|)\geq C_{\infty}|x|^{a_{\infty}} \quad  \mbox{for all} \,\, |x|\geq R.
\end{equation}
\end{rem}

\bigskip

\begin{lem}
\label{A1}

Assume the hypothesis $[A]$. Fix $R>0$. Then there exists a constant $C=C(N, R, a_{\infty} )>0$ such that, for each $u\in C_{c,r}^{\infty }(\mathbb{R}%
^{N})$, there holds
\begin{equation}
|u(x)|\leq C\, |x|^{-\frac{N+a_{\infty}-2}{2}}\, \left( \int_{B^{c}_{R}} A(|x|) \, |\nabla u |^2 \, dx \right)^{1/2}    \quad for \;R\leq|x|<+\infty.
\end{equation}
\end{lem}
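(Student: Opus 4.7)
The plan is the classical radial-lemma argument of Strauss, adapted to the weight $A$. Since $u\in C^\infty_{c,r}(\mathbb{R}^N)$, the radial profile $u(r)$ vanishes for $r$ large, so the fundamental theorem of calculus gives
\[
|u(r)| \;=\; \Bigl|\int_r^{+\infty} u'(s)\,ds\Bigr| \;\leq\; \int_r^{+\infty}|u'(s)|\,ds \qquad \text{for every } r\geq R.
\]

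Next I would insert the weight $A(s)s^{N-1}$ and apply the Cauchy--Schwarz inequality:
\[
\int_r^{+\infty}|u'(s)|\,ds \;=\; \int_r^{+\infty}\bigl(A(s)s^{N-1}\bigr)^{1/2}|u'(s)|\cdot \bigl(A(s)s^{N-1}\bigr)^{-1/2}\,ds,
\]
\[
|u(r)| \;\leq\; \left(\int_r^{+\infty} A(s)s^{N-1}|u'(s)|^2\,ds\right)^{1/2}\left(\int_r^{+\infty}\frac{ds}{A(s)s^{N-1}}\right)^{1/2}.
\]
Switching to Cartesian coordinates the first factor equals $\omega_N^{-1/2}\bigl(\int_{B_r^c}A(|x|)|\nabla u|^2\,dx\bigr)^{1/2}$, which is controlled by the corresponding integral on $B_R^c$ since $r\geq R$.

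For the second factor I would use Remark \ref{remA}: there is $C_\infty=C_\infty(R)>0$ such that $A(s)\geq C_\infty s^{a_\infty}$ for all $s\geq R$. Hence, for $r\geq R$,
\[
\int_r^{+\infty}\frac{ds}{A(s)s^{N-1}} \;\leq\; \frac{1}{C_\infty}\int_r^{+\infty}s^{-(N+a_\infty-1)}\,ds \;=\; \frac{1}{C_\infty(N+a_\infty-2)}\,r^{-(N+a_\infty-2)}.
\]
The crucial observation, and the only place where the constraints on $a_\infty$ really matter, is that this tail integral converges: the assumption $a_\infty>2-N$ in hypothesis $[A]$ gives the positive exponent $N+a_\infty-2>0$, so the integral is finite and decays like $r^{-(N+a_\infty-2)}$. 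There are no real obstacles here, only the bookkeeping of the constants; taking square roots and combining the two factors yields the claimed pointwise bound with $C=C(N,R,a_\infty)=\omega_N^{-1/2}\bigl(C_\infty(N+a_\infty-2)\bigr)^{-1/2}$.
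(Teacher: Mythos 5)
Your proposal is correct and follows essentially the same Strauss-type radial argument as the paper: fundamental theorem of calculus on the radial profile, Cauchy--Schwarz with the weight, and the lower bound $A(s)\geq C_\infty s^{a_\infty}$ from Remark \ref{remA} together with $a_\infty>2-N$ to make the tail integral converge. The only (immaterial) difference is that you place the full weight $A(s)s^{N-1}$ in the Cauchy--Schwarz splitting and invoke the lower bound on $A$ in the second factor, whereas the paper splits with $s^{N+a_\infty-1}$ and uses the lower bound to pass from $|x|^{a_\infty}$ to $A(|x|)$ in the first factor; the resulting estimate and constant are the same.
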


\begin{proof}
If $u\in C_{c,r}^{\infty}(\mathbb{R}^{N})$ and $|x|= r\geq R$, we have
\begin{equation}
-u(r)=\int_{r}^{\infty}u'(s)ds.
\end{equation}
Using the hypothesis $[A]$, we obtain
$$
|u(r)|\leq\int_{r}^{\infty}|u'(s)|ds
$$

$$
=\int_{r}^{\infty}|u'(s)|s^{\frac{N+a_{\infty}-1}{2}}s^{-\frac{N+a_{\infty}-1}{2}}ds
$$

$$
\leq\left(\int_{r}^{\infty}|u'(s)|^{2}s^{N-1}s^{a_{\infty}}ds\right)^{\frac{1}{2}}\left(\int_{r}^{\infty}s^{-(N+a_{\infty}-1)}ds\right)^{\frac{1}{2}}
$$

$$
=  (\omega_{N})^{-\frac{1}{2}}\left(\int_{B_{r}^c}|x|^{a_{\infty}}|\nabla u|^{2}dx\right)^{\frac{1}{2}}\left(\int_{r}^{\infty}s^{-(N+a_{\infty}-1)}ds\right)^{\frac{1}{2}}
$$

$$
\leq \left( C_{\infty} (R) \right)^{1/2} (\omega_{N})^{-\frac{1}{2}}\left(\int_{B_{r}^c} A(|x|)|\nabla u|^{2}dx\right)^{\frac{1}{2}}\left(\int_{r}^{\infty}s^{-(N+a_{\infty}-1)}ds\right)^{\frac{1}{2}}.
$$
As
$$
\int_{r}^{\infty}s^{-(N+a_{\infty}-1)}ds=\frac{r^{-(N+a_{\infty}-2)}}{N+a_{\infty}-2} \, ,
$$
it follows
$$
|u(r)|\leq C\,r^{-\frac{N+a_{\infty}-2}{2}}\, \left( \int_{B^{c}_{R}} A(|x|) \, |\nabla u |^2 \, dx \right)^{1/2} 
$$
\noindent where $C = \left( C_{\infty} (R) \right)^{1/2}  \, (\omega_{N})^{-\frac{1}{2}}\, \left(\frac{1}{N+a_{\infty}-2} \right)^{1/2}= C(N, R, a_{\infty} )$, and this is our thesis.
\end{proof}

\bigskip

\bigskip

\begin{lem}
\label{A2} 

Assume the hypothesis $[A]$. Fix $R >0$. Then there exists a constant $C= C(N, R, a_0 )>0$ such that, for each $u\in   C_{c,r}^{\infty }(B_R)$, there holds

\begin{equation}
|u(x)|\leq C\, |x|^{-\frac{N+a_{0}-2}{2}}\, \left( \int_{B_{R}} A(|x|) \, |\nabla u |^2 \, dx \right)^{1/2} \quad for \;0<|x|<R.
\end{equation}

\end{lem}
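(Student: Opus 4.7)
The plan is to mimic the proof of Lemma \ref{A1}, but integrating from $r$ up to $R$ instead of to $+\infty$, exploiting the fact that $u\in C_{c,r}^{\infty}(B_R)$ means $u$ vanishes near $\partial B_R$, hence $u(R)=0$. Writing $|x|=r$, the starting point is
$$
u(r)=u(R)-\int_{r}^{R}u'(s)\,ds=-\int_{r}^{R}u'(s)\,ds.
$$

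I would then apply Cauchy--Schwarz after inserting the factor $s^{(N-1+a_{0})/2}s^{-(N-1+a_{0})/2}$, exactly as in Lemma \ref{A1} but with $a_{0}$ in place of $a_{\infty}$ and with the integration limits $r$ and $R$:
$$
|u(r)|\leq \left(\int_{r}^{R}|u'(s)|^{2}s^{N-1}s^{a_{0}}\,ds\right)^{1/2}\left(\int_{r}^{R}s^{-(N+a_{0}-1)}\,ds\right)^{1/2}.
$$
For the first factor, passing to the $x$-integral over $B_{R}\setminus B_{r}\subset B_{R}$ introduces $(\omega_{N})^{-1/2}$, and Remark \ref{remA} together with $|x|\leq R$ gives $|x|^{a_{0}}\leq C_{0}(R)^{-1}A(|x|)$, so this factor is bounded by $(C_{0}(R)\,\omega_{N})^{-1/2}\bigl(\int_{B_{R}}A(|x|)|\nabla u|^{2}dx\bigr)^{1/2}$.

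The main (and only delicate) point is the second factor. Because the hypothesis $[A]$ requires $a_{0}>2-N$, we have $N+a_{0}-2>0$, and thus
$$
\int_{r}^{R}s^{-(N+a_{0}-1)}\,ds=\frac{r^{-(N+a_{0}-2)}-R^{-(N+a_{0}-2)}}{N+a_{0}-2}\leq \frac{r^{-(N+a_{0}-2)}}{N+a_{0}-2}.
$$
Without the lower bound $a_{0}>2-N$ this estimate would fail, so this is exactly where that hypothesis is used; it is the analogue of the convergence at $+\infty$ which required $a_{\infty}$-information in Lemma \ref{A1}. Note that no upper bound on $a_{0}$ is needed here; the assumption $a_{0}\leq 2$ plays no role in this estimate.

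Combining the two factors yields
$$
|u(r)|\leq C\,r^{-\frac{N+a_{0}-2}{2}}\left(\int_{B_{R}}A(|x|)|\nabla u|^{2}\,dx\right)^{1/2},
$$
with $C=(C_{0}(R)\,\omega_{N}(N+a_{0}-2))^{-1/2}$, which depends only on $N$, $R$ and $a_{0}$ as required. Since the steps are entirely parallel to those of Lemma \ref{A1}, there should be no further obstacle beyond keeping track of the sign conventions coming from integrating towards $R$ rather than towards $+\infty$.
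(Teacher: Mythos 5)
Your proof is correct and follows essentially the same route as the paper's: integrate $u'$ from $r$ to $R$ using $u(R)=0$, apply Cauchy--Schwarz with the weight $s^{(N+a_0-1)/2}$, bound $|x|^{a_0}$ by $A(|x|)$ via Remark \ref{remA}, and use $N+a_0-2>0$ to control $\int_r^R s^{-(N+a_0-1)}\,ds$. Your version of the constant, with $C_0(R)^{-1/2}$ rather than $C_0(R)^{1/2}$, is in fact the correct placement of that factor.
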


\begin{proof}
Let $u\in   C_{c,r}^{\infty }(B_R)$ and take $|x|=r<R$.
Since $u(R)=0$, we have
\begin{equation}
-u(r)=u(R)-u(r)=\int_{r}^{R}u'(s)ds.
\end{equation}
The same arguments of Lemma \ref{A1} yield
$$
|u(r)|\leq\int_{r}^{R}|u'(s)|ds
$$

$$
\leq\left(\int_{r}^{R}|u'(s)|^{2}s^{N-1}s^{a_{0}}ds\right)^{\frac{1}{2}}\left(\int_{r}^{R}s^{-(N+a_{0}-1)}ds\right)^{\frac{1}{2}}
$$

$$
\leq (\omega_{N})^{-\frac{1}{2}} \left(\int_{B_R \backslash B_r }A(|x|)|\nabla u|^{2}dx\right)^{\frac{1}{2}} \left(\int_{r}^{R}s^{-(N+a_{0}-1)}ds\right)^{\frac{1}{2}}
$$

$$
\leq  (\omega_{N})^{-\frac{1}{2}} \left( C_0 (R)\right)^{1/2}\, \left( \int_{B_{R}} A(|x|) \, |\nabla u |^2 \, dx \right)^{1/2} \,  \left(\frac{1}{N+a_{0}-2} \right)^{1/2} r^{-\frac{N+a_{0}-2}{2}}
$$

\noindent that is 

$$
|u(r)|\leq C r^{-\frac{N+a_{0}-2}{2}} \left( \int_{B_{R}} A(|x|) \, |\nabla u |^2 \, dx \right)^{1/2} 
$$

\noindent where $C = (\omega_{N})^{-\frac{1}{2}} \left( C_0 (R )\right)^{1/2}\left(\frac{1}{N+a_{0}-2} \right)^{1/2} = C( N, R, a_0 ) $, which is the thesis.
\end{proof}

\bigskip
\bigskip

\noindent We now introduce another function space.

\bigskip

\begin{defin}
$$S_A = \left\{ u \in   C_{c,r}^{\infty }(\mathbb{R}
^{N}) \, \Big| \,  \int_{\mathbb{R}
^{N}} A(|x|) \, |\nabla u |^2 \, dx  < + \infty \right\}$$
 
\noindent $S_A$ is a subspace of $ C_{c,r}^{\infty }(\mathbb{R}
^{N})$. We define on $S_A$ the norm $||u||_A =  \left( \int_{\mathbb{R}
^{N}} A(|x|) \, |\nabla u |^2 \, dx \right)^{1/2}$. 
\end{defin}

\bigskip

\begin{defin}
Let $2-N<a_{0},\, a_{\infty}\leq2$. We define the following real numbers
\begin{equation}
p_{0}:=\frac{2N}{N+a_{0}-2},\quad p_{\infty}:=\frac{2N}{N+a_{\infty}-2}.
\end{equation}
\end{defin}

\begin{rem}
Notice that $p_{0}, p_{\infty}\geq 2$.
\end{rem}

\bigskip

\noindent The main property of $S_A$ is given by the following lemma.

\bigskip

\begin{lem}
\label{A4}

Consider $A$ satisfying the hypothesis $[A]$. The embedding
\begin{equation*}
S_A \hookrightarrow L^{p_{0}}(\mathbb{R}^{N})+L^{p_{\infty}}(\mathbb{R}^{N})
\end{equation*}
is continuous.
 \end{lem}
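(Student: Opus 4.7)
My plan is to use the decomposition $u = u\chi_{B_R} + u\chi_{B_R^c}$ for a fixed $R > 0$, and to establish the two bounds $\|u\chi_{B_R}\|_{L^{p_0}(\mathbb{R}^N)} \leq C\|u\|_A$ and $\|u\chi_{B_R^c}\|_{L^{p_\infty}(\mathbb{R}^N)} \leq C\|u\|_A$. Since the sum-space norm is bounded by the maximum of the two component norms over any admissible decomposition, this will give the desired continuous embedding.

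I would first extend Lemma~\ref{A2} from $C_{c,r}^\infty(B_R)$ to any $u \in S_A$: writing $u(r) = u(R) - \int_r^R u'(s)\,ds$ for $r < R$, bounding $|u(R)|$ by $C\|u\|_A$ via Lemma~\ref{A1}, and applying Cauchy--Schwarz against the weight provided by hypothesis~$[A]$, this yields the pointwise estimate $|u(x)| \leq C|x|^{-(N+a_0-2)/2}\|u\|_A$ on $B_R$. The key algebraic identity $(N+a_0-2)(p_0-2)/2 = 2-a_0$ then allows me to write $|u|^{p_0} = |u|^{p_0-2}\,|u|^2$ and insert the pointwise bound into the first factor, reducing the $L^{p_0}(B_R)$ estimate to the control of $\int_{B_R}|x|^{a_0-2}|u|^2\,dx$ by $C\|u\|_A^2$. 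The latter I would obtain from the classical weighted Hardy inequality
\[
\int_{\mathbb{R}^N}|x|^{a_0-2}|v|^2\,dx \leq \frac{4}{(N+a_0-2)^2}\int_{\mathbb{R}^N}|x|^{a_0}|\nabla v|^2\,dx,
\]
which holds because $a_0 > 2-N$, applied to $v = u\phi$ where $\phi$ is a radial cutoff equal to $1$ on $B_R$ and supported in $B_{2R}$. The main term of $\int |x|^{a_0}|\nabla(u\phi)|^2\,dx$ is dominated by $\|u\|_A^2$ via hypothesis~$[A]$ on $B_{2R}$; the cutoff-error term $\int_{B_{2R}\setminus B_R}|u|^2|\nabla\phi|^2|x|^{a_0}\,dx$ is controlled by $C\|u\|_A^2$ since Lemma~\ref{A1} bounds $|u|$ uniformly by $C\|u\|_A$ on the transition annulus.

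The complementary bound $\|u\chi_{B_R^c}\|_{L^{p_\infty}(\mathbb{R}^N)} \leq C\|u\|_A$ follows by the mirror argument: Lemma~\ref{A1} provides the pointwise estimate on $B_R^c$ directly, the twin identity $(N+a_\infty-2)(p_\infty-2)/2 = 2-a_\infty$ reduces the task to $\int_{B_R^c}|x|^{a_\infty-2}|u|^2\,dx \leq C\|u\|_A^2$, and the weighted Hardy inequality with exponent $a_\infty > 2-N$ applied to $u\psi$ for a cutoff $\psi$ equal to $1$ on $B_R^c$ and vanishing on $B_{R/2}$ closes the estimate, with the annular boundary term again handled by the pointwise bounds on that set. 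The main technical obstacle is precisely this cutoff step: hypothesis~$[A]$ supplies the lower bounds $A(|x|) \geq c|x|^{a_0}$ only on $B_R$ and $A(|x|) \geq c|x|^{a_\infty}$ only on $B_R^c$, so the weighted Hardy inequality cannot be applied to $u$ directly, and one is forced to localize via cutoffs and absorb an annular $L^2$-error. The scheme closes because on the transition annulus both asymptotic regimes of $A$ are uniformly non-degenerate and both pointwise estimates of Lemmas~\ref{A1} and \ref{A2} are available, together bounding $|u|$ by a constant multiple of $\|u\|_A$ exactly where the glueing occurs.
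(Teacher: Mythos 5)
Your proof is correct, and it uses the same decomposition $u = u\chi_{B_{R}} + u\chi_{B_{R}^{c}}$ as the paper, but the engine behind the two $L^{p}$ bounds is organized differently. The paper integrates by parts directly on $\int_{R}^{\infty} r^{N-1}|u|^{p_{\infty}}\,dr$ (and on the analogous interior integral for $\rho u$), applies Cauchy--Schwarz, and uses the pointwise decay of Lemma \ref{A1} together with the identity $(N+a_{\infty}-2)(p_{\infty}-2)/2 = 2-a_{\infty}$ to absorb the excess power of $|u|$, arriving at a self-improving inequality of the form $I \leq C\|u\|_{A}^{p/2}\, I^{1/2}$, whence $I \leq C\|u\|_{A}^{p}$. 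You instead peel off $|u|^{p-2}$ pointwise first and then invoke the classical weighted Hardy inequality to control $\int |x|^{a-2}|u|^{2}\,dx$; since that inequality is itself proved by the same integration by parts plus Cauchy--Schwarz, the two arguments are close cousins, but yours outsources the analytic core to a named classical inequality while the paper keeps everything self-contained. A second genuine difference is how the interior pointwise estimate is obtained: the paper multiplies by a cutoff so that Lemma \ref{A2} (stated only for $C_{c,r}^{\infty}(B_{R})$) applies verbatim to $\rho u$, whereas you extend Lemma \ref{A2} to arbitrary $u\in S_{A}$ by adding the boundary value $|u(R)|\leq C\|u\|_{A}$ (controlled via Lemma \ref{A1}) and absorbing it into the singular weight, which is legitimate precisely because $N+a_{0}-2>0$. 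The price is that you need a cutoff in both Hardy applications (the paper needs one only on the interior side), but in each case the annular error term is uniformly controlled by the pointwise bounds on the transition region, exactly as you argue, and both routes produce constants of the form $C=C(N,R,a_{0},a_{\infty})$.
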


\begin{proof}
Let us fix $ R >0$ and $C>0$ such that $A(|x|)\geq C |x|^{a_{\infty}}$ for $|x|\geq R$. Take $u \in S_A$. We want to estimate $\int_{B_{R}^{c}}|u|^{p_{\infty}}dx$. With an integration by parts, and using Lemma \ref{A1}, we obtain
$$
\int_{B_{R}^{c}}|u|^{p_{\infty}}dx=\omega_{N}\int_{R}^{\infty}r^{N-1}|u(r)|^{p_{\infty}}dr
$$

$$
\leq\frac{2\omega_{N}}{N+a_{\infty}-2}\int_{R}^{\infty}r^{N}|u(r)|^{p_{\infty}-1}|u'(r)|dr
$$

$$
\leq\frac{2\omega_{N}}{N+a_{\infty}-2}\left(\int_{R}^{\infty}r^{a_{\infty}}|u'(r)|^{2}r^{N-1}dr\right)^{\frac{1}{2}}\left(\int_{R}^{\infty}r^{(N-\frac{N-1+a_{\infty}}{2})2}|u(r)|^{2(p_{\infty}-1)}dr\right)^{\frac{1}{2}}
$$

$$
\leq C\frac{2\omega_{N}^{\frac{1}{2}}}{N+a_{\infty}-2}\left(\int_{B_{R}^{c}}A(|x|)|\nabla u|^{2}dx\right)^{\frac{1}{2}}\left(\int_{R}^{\infty}r^{N-1}|u(r)|^{p_{\infty}}r^{2-a_{\infty}}|u(r)|^{p_{\infty}-2}dr\right)^{\frac{1}{2}}
$$

$$
\leq C^{\frac{p_{\infty}-2}{2}}\frac{2\omega_{N}^{\frac{1}{2}}}{N+a_{\infty}-2}\left(\int_{B_{R}^{c}}A(|x|)|\nabla u|^{2}dx\right)^{\frac{p_{\infty}}{4}}\left(\int_{R}^{\infty}r^{N-1}|u(r)|^{p_{\infty}}dr\right)^{\frac{1}{2}}
$$

$$
\leq C \left(\int_{B_{R}^{c}}A(|x|)|\nabla u|^{2}dx\right)^{\frac{p_{\infty}}{4}}  \left(\int_{B_{R}^{c}}|u|^{p_{\infty}}dx\right)^{\frac{1}{2}},
$$

\noindent where $C= C(N, R, a_{\infty })$ may change from line to line. From this we obtain
\begin{equation}
\left(\int_{B_{R}^{c}}|u|^{p_{\infty}}dx\right)^{\frac{1}{p_{\infty}}}\leq C \, \left(\int_{B_{R}^{c}}A(|x|)|\nabla u|^{2}dx\right)^{\frac{1}{2}}  \leq C\|u\|_A.\label{eq:2222}
\end{equation}

\noindent Assume now that $C= C_0 (R+1)>0$ is such that $A(|x|)\geq C |x|^{a_{0}}$ for $0<|x|=r \leq R+1$. We want to estimate the integral $\int_{B_{R}}|u|^{p_{0}}dx$.
Let us define a radial cut-off function $\rho(x)\in C_{0,r}^{\infty}(\mathbb{R}^{N})$ such that $\rho (x) \in [0,1]$ for all $x$ and
$$                                                   
\rho(x)= \rho (|x|)=\begin{cases}
1 & 0<|x|\leq R\\
0 & |x|\geq R+\frac{1}{2}
\end{cases}
$$
\noindent Of course $\rho u\in C_{c,r}^{\infty }(B_{R+1}) $ and we can employ Lemma \ref{A2}. With the same computations used in the previous case, we have 
$$
\int_{B_{R+1}}|\rho u|^{p_{0}}dx
$$

$$
\leq C \, \left(\int_{B_{R+1}}A(|x|)|\nabla(\rho u)|^{2}dx\right)^{\frac{p_{0}}{4}}\left(\int_{B_{R+1}}|\rho u|^{p_{0}}dx\right)^{\frac{1}{2}},
$$

\noindent where $C=C(N, R, a_0 )$. From this we derive
$$
\int_{B_{R+1}}|\rho u|^{p_{0}}dx\leq C\left(\int_{B_{R+1}}A(|x|)|\nabla(\rho u)|^{2}dx\right)^{\frac{p_{0}}{2}}.
$$

\noindent Using the continuity of $A$ in the compact set $\overline{ B_{R+1}\setminus B_{R} }$ and thanks to Lemma \ref{A1}, we obtain

$$
\int_{B_{R+1}}A(|x|)|\nabla(\rho u)|^{2}dx
$$

$$
\leq C\int_{B_{R+1}}A(|x|)(|\nabla u|^{2}\rho^{2}+|\nabla\rho|^{2}u^{2})dx
$$

$$
\leq C\left(\int_{B_{R+1}}A(|x|)|\nabla u|^{2}dx+\int_{B_{R+1}\setminus B_{R}}|\nabla\rho|^{2}u^{2}dx\right)
$$

$$
\leq C\left(\|u\|_{A}^{2}+\|u\|_{A}^{2}\int_{B_{R+1}\setminus B_{R}}|x|^{-(N+a_{\infty}-2)}dx\right)
$$

$$
\leq C\|u\|_{A}^{2},
$$

\noindent where $C=C(N, R, a_{\infty})$. Thus, it holds
$$
\int_{B_{R+1}}|\rho u|^{p_{0}}dx\leq C\|u\|_{A}^{p_{0}}, 
$$

\noindent where $C=C(N, R, a_{0}, a_{\infty})$. As a consequence we get
$$
\int_{B_{R}}|u|^{p_0}dx=\int_{B_{R}}|\rho u|^{p_{0}}dx\leq\int_{B_{R+1}}|\rho u|^{p_{0}}dx\leq C\|u\|_{A}^{p_{0}},
$$

\noindent hence

\begin{equation}
\left(\int_{B_{R}}|u|^{p_{0}}dx\right)^{\frac{1}{p_{0}}}\leq C\|u\|_A.\label{eq:333-1}
\end{equation}

\noindent Now we define 
$$
\begin{cases}
\bar{u}_{1}:=u\chi_{B_{R}}\\
\bar{u}_{2}:=u\chi_{B_{R}^{c}}
\end{cases},
$$

\noindent so we obtain $$u=\bar{u}_{1}+\bar{u}_{2}$$ with $\bar{u}_{1}\in L^{p_{0}}(\mathbb{R}^{N})$
and $\bar{u}_{2}\in L^{p_{\infty}}(\mathbb{R}^{N})$, from which it follows

$$
\|u\|_{L^{p_{0}}+L^{p_{\infty}}
   }\leq\|\bar{u}_{1}\|_{L^{p_{0}}(\mathbb{R}^{N})}+\|\bar{u}_{2}\|_{L^{p_{\infty}}(\mathbb{R}^{N})}
$$

$$
=\|u\|_{L^{p_{0}}(B_{R})}+\|u\|_{L^{p_{\infty}}(B_{R}^{c})}\leq C\|u\|_A.
$$

\noindent This hold for any $u\in S_A$, with a constant $C=C(N, R, a_0 , a_{\infty})$. Thus, the embedding
$$
S_A \hookrightarrow L^{p_{0}}(\mathbb{R}^{N})+L^{p_{\infty}}(\mathbb{R}^{N})
$$

\noindent is continuous.
\end{proof}

\bigskip

\noindent We want now to introduce the completion of $S_A$ with respect to $||\cdot ||_A$. 

\bigskip

\begin{defin}
$D_{A}$ is the space of all $u \in L^{p_0} + L^{p_{\infty}}$ for which there is a sequence $\{u_n \}_n \subset S_A$ such that

\begin{itemize}  
\item $u_n \rightarrow u$ in $L^{p_0} + L^{p_{\infty}}$.

\item $\{u_n \}_n $ is a Cauchy sequence with respect to $|| \cdot ||_A$.

\end{itemize}

\end{defin}

\bigskip
\bigskip

 \noindent Of course, $D_A$ is a linear subspace of $L^{p_0} + L^{p_{\infty}}$. From the previous results we deduce the following two lemmas, which say that $D_A $ 
is the completion of $S_A$ wth respect to $|| \cdot ||_A$. The arguments are essentially standard, so we will skip the details.

\bigskip

\begin{lem}
 
Assume $[A]$. Let $u \in D_A$. Then $u$ has weak derivatives $D_i u$ in the open set $\Omega=\mathbb{R}^{N} \backslash \{0 \}$ ($i=1,..., N$) and it holds $D_i u \in L^2_{loc}(\Omega )$. 
\par \noindent If $\{u_n \}_n $ is the sequence in $S_A$ given by the definition of $u \in D_A$, then  
$$\int_{\mathbb{R}^{N}} A(|x|) |\nabla u(x) - \nabla u_n|^2 \, dx \rightarrow 0.$$

\noindent In particular $\int_{\mathbb{R}^{N}} A(|x|) |\nabla u(x)| \, dx < + \infty $, and $||u||_A =\left( \int_{\mathbb{R}^{N}} A(|x|) |\nabla u(x)| \, dx
	\right)^{1/2}$ is a norm on $D_A$.
	\end{lem}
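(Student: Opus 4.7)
The plan is to use the two pointwise estimates of Lemmas \ref{A1} and \ref{A2}, together with the fact that $A$ is continuous and strictly positive on $(0,+\infty)$, to transport the Cauchy property of $\{u_n\}$ in the norm $\|\cdot\|_A$ into local uniform convergence on $\Omega:=\mathbb{R}^{N}\setminus\{0\}$ and into local $L^{2}$ convergence of $\{\nabla u_n\}$ on $\Omega$. Once these are in hand, identification of the weak gradient and of the limit in $\|\cdot\|_A$ is standard.

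First I would fix a compact set $K\subset\Omega$. Since $A$ is continuous and positive on $(0,+\infty)$, there is $\alpha_K>0$ with $A(|x|)\geq\alpha_K$ on $K$, and therefore
$$\alpha_K\int_K|\nabla u_n-\nabla u_m|^{2}\,dx\leq\int_{\mathbb{R}^{N}}A(|x|)\,|\nabla u_n-\nabla u_m|^{2}\,dx=\|u_n-u_m\|_{A}^{2}\to 0.$$
A standard exhaustion of $\Omega$ by compact sets therefore produces a vector field $g\in L^{2}_{\mathrm{loc}}(\Omega;\mathbb{R}^{N})$ with $\nabla u_n\to g$ in $L^{2}_{\mathrm{loc}}(\Omega)$. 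Moreover, since $\{\sqrt{A}\,\nabla u_n\}$ is Cauchy in $L^{2}(\mathbb{R}^{N};\mathbb{R}^{N})$, it converges in that space to some $w$; restricting to compact subsets of $\Omega$ forces $w=\sqrt{A}\,g$ a.e.

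Next, applying Lemma \ref{A1} to the differences $u_n-u_m$ gives, for each fixed $R>0$,
$$|u_n(x)-u_m(x)|\leq C(R)\,|x|^{-(N+a_{\infty}-2)/2}\,\|u_n-u_m\|_{A}\quad\text{for }|x|\geq R,$$
so $\{u_n\}$ is uniformly Cauchy on $\{|x|\geq R\}$. To deal with a neighbourhood of the origin I would pick a smooth radial cutoff $\rho$ equal to $1$ on $B_{R}$ and supported in $B_{R+1}$, apply Lemma \ref{A2} to $\rho(u_n-u_m)\in C^{\infty}_{c,r}(B_{R+1})$, and control $\|\rho(u_n-u_m)\|_{A}$ by $\|u_n-u_m\|_{A}$ plus an error of the form $\int_{B_{R+1}\setminus B_{R}}A\,|u_n-u_m|^{2}|\nabla\rho|^{2}\,dx$, which tends to zero thanks to the previous uniform convergence on the annulus. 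This produces local uniform convergence of $\{u_n\}$ to a continuous function $v$ on $\Omega$; since $u_n\to u$ in $L^{p_{0}}+L^{p_{\infty}}$, a subsequence converges to $u$ a.e., so $v=u$ a.e.\ on $\Omega$.

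Finally, passing to the limit in the distributional identity $\int u_n\,\mathrm{div}\,\varphi\,dx=-\int\nabla u_n\cdot\varphi\,dx$ for $\varphi\in C_{c}^{\infty}(\Omega;\mathbb{R}^{N})$ (using the local $L^{1}$ convergence $u_n\to u$ inherited from the previous step and the local $L^{2}$ convergence $\nabla u_n\to g$) shows that $g=\nabla u$ weakly on $\Omega$, so $D_i u\in L^{2}_{\mathrm{loc}}(\Omega)$. Consequently $w=\sqrt{A}\,\nabla u$ a.e., giving $\int_{\mathbb{R}^{N}}A\,|\nabla u|^{2}\,dx<+\infty$ and $\int_{\mathbb{R}^{N}}A\,|\nabla u-\nabla u_n|^{2}\,dx\to 0$. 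That $\|\cdot\|_{A}$ is then a norm on $D_{A}$ reduces to positive definiteness: if $\|u\|_{A}=0$ then $\nabla u=0$ a.e.\ on the connected open set $\Omega$ (recall $N\geq 3$), so $u$ is a constant, and the only constant belonging to $L^{p_{0}}+L^{p_{\infty}}$ is $0$. I expect the step combining Lemma \ref{A2} with a cutoff, and reconciling the resulting pointwise information with the sum-space convergence $u_n\to u$, to be the main technical obstacle.
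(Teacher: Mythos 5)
Your argument is correct. Note that the paper itself gives no proof here (it declares the lemma ``a simple exercise on weak derivatives''), so there is nothing to compare line by line; your write-up is a sound and complete version of the standard argument the authors have in mind: use the positivity of $A$ on compact subsets of $\Omega$ to turn the $\|\cdot\|_A$-Cauchy property into $L^2_{\mathrm{loc}}(\Omega)$ convergence of the gradients, identify the limit field as the weak gradient of $u$ by passing to the limit in the integration-by-parts identity, and then recover the global statement from the $L^2$-limit of $\sqrt{A}\,\nabla u_n$. The positive-definiteness argument (no nonzero constant lies in $L^{p_0}+L^{p_\infty}$ since both exponents are finite) is also right.

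One remark: the cutoff step ``to deal with a neighbourhood of the origin'' is superfluous. Since $\Omega=\mathbb{R}^N\setminus\{0\}$, every compact subset of $\Omega$ is contained in some $\{|x|\ge R\}$ with $R>0$, so Lemma \ref{A1} alone already gives uniform Cauchyness of $\{u_n\}$ on all compacta of $\Omega$; moreover, Lemma \ref{A2} applied to $\rho(u_n-u_m)$ would only yield a bound proportional to $|x|^{-(N+a_0-2)/2}$, which is not uniform near $0$ anyway. In fact even the local uniform convergence is more than you need: the local $L^1(\Omega)$ convergence of $u_n$ to $u$ required in the distributional identity follows directly from convergence in $L^{p_0}+L^{p_\infty}$ via the embedding of the sum space into $L^{p_*}(E)$ for finite-measure $E$ (Corollary \ref{C1} / Proposition 2.17 of \cite{BPR}), which would shorten the middle of your proof considerably.
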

		\begin{proof}
	The proof is a simple exercise on weak derivatives, and we leave it to the reader.
	\end{proof}
	
	\bigskip
	
	\begin{lem}
	Assume $[A]$. If we consider the space $D_A$ endowed with the norm $|| \cdot ||_A $ we have
	$$D_{A}\hookrightarrow L^{p_{0}}(\mathbb{R}^{N})+L^{p_{\infty}}(\mathbb{R}^{N})$$
	
	\noindent with continuous embedding.
	\par \noindent Furthermore, $D_A$ endowed with the norm $|| \cdot ||_A $ is complete, so it is an Hilbert space.
	\end{lem}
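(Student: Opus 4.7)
The plan is to leverage the inequality from Lemma \ref{A4} on the approximating sequences supplied by the definition of $D_A$, and then handle completeness by a standard approximation/diagonal argument.

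For the continuous embedding, let $u\in D_A$ and pick $\{u_n\}\subset S_A$ as in the definition. By Lemma \ref{A4} we already know $\|u_n\|_{L^{p_0}+L^{p_\infty}}\leq C\|u_n\|_A$ for all $n$, with a constant $C=C(N,R,a_0,a_\infty)$ independent of $n$. Since $u_n\to u$ in $L^{p_0}+L^{p_\infty}$ we have $\|u_n\|_{L^{p_0}+L^{p_\infty}}\to\|u\|_{L^{p_0}+L^{p_\infty}}$, and by the preceding lemma $\int A(|x|)|\nabla u-\nabla u_n|^2\,dx\to 0$, which yields $\|u_n\|_A\to\|u\|_A$. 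Passing to the limit in the inequality gives $\|u\|_{L^{p_0}+L^{p_\infty}}\leq C\|u\|_A$, which is the desired continuous embedding.

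For completeness, let $\{v_k\}\subset D_A$ be a Cauchy sequence with respect to $\|\cdot\|_A$. The continuous embedding just established shows that $\{v_k\}$ is also Cauchy in $L^{p_0}+L^{p_\infty}$; since the latter is a Banach space (see \cite{BPR}), there exists $v\in L^{p_0}+L^{p_\infty}$ with $v_k\to v$ in $L^{p_0}+L^{p_\infty}$. It remains to produce an approximating sequence in $S_A$ for $v$ and to check convergence in $\|\cdot\|_A$. For each $k$, the definition of $D_A$ together with the previous lemma yields a $w_k\in S_A$ such that $\|v_k-w_k\|_A<1/k$ and $\|v_k-w_k\|_{L^{p_0}+L^{p_\infty}}<1/k$. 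A triangle-inequality argument then shows that $\{w_k\}$ is Cauchy in $\|\cdot\|_A$ and that $w_k\to v$ in $L^{p_0}+L^{p_\infty}$; hence $v\in D_A$ by definition. Writing $\|v-v_k\|_A\leq\|v-w_k\|_A+\|w_k-v_k\|_A$ and noting that $\|v-w_k\|_A=\lim_{j\to\infty}\|w_j-w_k\|_A$ (again by the previous lemma applied to $v$ with approximating sequence $\{w_k\}$) shows $v_k\to v$ in $D_A$.

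Finally, the map
\[
\langle u,v\rangle_A:=\int_{\mathbb{R}^N}A(|x|)\,\nabla u\cdot\nabla v\,dx
\]
is well defined on $D_A\times D_A$ by polarization and Cauchy--Schwarz applied to the weak gradients on $\mathbb{R}^N\setminus\{0\}$, it is bilinear and symmetric, and it induces the norm $\|\cdot\|_A$. Combined with the completeness proved above, this makes $(D_A,\|\cdot\|_A)$ a Hilbert space. The main point requiring care is the diagonal selection of $\{w_k\}$ so that $v$ simultaneously appears as an $L^{p_0}+L^{p_\infty}$-limit and an $\|\cdot\|_A$-Cauchy-sequence limit; everything else is a routine limit passage in the inequality of Lemma \ref{A4}.
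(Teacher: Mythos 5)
Your proposal is correct and follows essentially the same route as the paper: continuity by passing to the limit in the $S_A$-inequality of Lemma \ref{A4} along the approximating sequence, and completeness via the diagonal selection of $w_k\in S_A$ with $\|v_k-w_k\|_A<1/k$ followed by the triangle inequality. The only (harmless) addition is your explicit remark that $\|\cdot\|_A$ is induced by an inner product, which the paper leaves implicit.
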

	\begin{proof}
To prove continuity of embedding, let $||u||_{\cal L}$ be the norm of $u\in L^{p_{0}}+L^{p_{\infty}}$. For $u \in D_A$ let $\{u_n \}_n $ be the sequence in $S_A$ given by the definition. By the previous lemma we have $||u_n ||_{\cal L} \leq C \, ||u_n ||_{D_A}$, for a suitable positive constant $C$. But $||u_n -u||_{\cal L} \rightarrow 0$ by hypothesis and $||u_n -u||_{D_A} \rightarrow 0$ by $(i)$, so we get 
	$$||u ||_{\cal L} \leq C \, ||u ||_{D_A},$$
	
	\noindent which is the thesis.
	\par \noindent To prove completeness, let $\{v_n \}_n \subset D_A$ be a Cauchy sequence. From $(ii)$, it is a Cauchy sequence in $L^{p_{0}}+L^{p_{\infty}}$, so 
	$ u_n \rightarrow u$ in $L^{p_{0}}+L^{p_{\infty}}$. By $(i)$, for each $n$ we can choose $v_n \in S_A$ such that $||u_n - v_n ||_A \leq 1/n$. Then we have
	$$||u-v_n ||_{\cal L}\leq ||u-u_n ||_{\cal L} +||u_n-v_n ||_{\cal L}\leq ||u-u_n ||_{\cal L} + C||u_n-v_n ||_{D_A}\rightarrow 0 .$$
	
	\noindent Also, it is 
	
	$$||v_m-v_n ||_{D_A} \leq ||v_m-u_m ||_{D_A} + ||u_n-u_m ||_{D_A} +||u_n-v_n ||_{D_A}\leq \frac{1}{m} + ||u_n-u_m ||_{D_A} + \frac{1}{n},$$
	
	\noindent so also $\{v_n \}_n$ is a Cauchy sequence with respect to $||\cdot||_{D_A}$. By definition of $D_A $ we have $u \in D_A$, and by $(i)$ $||u-v_n ||_A \rightarrow 0$. Now we get
	
	$$||u- u_n ||_A \leq ||u- v_n ||_A + ||v_n- u_n ||_A \rightarrow 0 ,$$
	
	\noindent and the proof is complete. 
	
\end{proof}

\bigskip

\noindent We will also need the following corollary.

\begin{cor}
\label{C1} Consider $A$ satisfying the hypothesis $[A]$. We define 

$$
a:=\mathrm{max}\{a_{0},a_{\infty}\},\quad p_{*}:=\frac{2N}{N+a-2}=\mathrm{min}\{ p_{0}, p_{\infty} \} \geq 2.
$$

Then, for any finite measure set $E\subseteq\mathbb{R}^{N}$, the embedding

$$
D_{A}\hookrightarrow L^{p_{*}}(E)
$$

is continuous.
\end{cor}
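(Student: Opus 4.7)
The plan is to reduce this to the continuous embedding $D_A\hookrightarrow L^{p_0}(\mathbb{R}^N)+L^{p_\infty}(\mathbb{R}^N)$ established in the previous lemma, together with the standard fact that on a finite measure set the larger Lebesgue exponent embeds into the smaller one via H\"older's inequality. Assume without loss of generality that $p_\ast=p_0$, i.e. $p_0\leq p_\infty$ (the other case is symmetric).

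For $u\in D_A$ and any decomposition $u=u_1+u_2$ with $u_1\in L^{p_0}(\mathbb{R}^N)$ and $u_2\in L^{p_\infty}(\mathbb{R}^N)$, I would estimate separately. Clearly $\|u_1\|_{L^{p_0}(E)}\leq \|u_1\|_{L^{p_0}(\mathbb{R}^N)}$. For $u_2$, since $|E|<+\infty$ and $p_0\leq p_\infty$, H\"older gives
\[
\|u_2\|_{L^{p_0}(E)}\leq |E|^{\frac{1}{p_0}-\frac{1}{p_\infty}}\,\|u_2\|_{L^{p_\infty}(E)}\leq |E|^{\frac{1}{p_0}-\frac{1}{p_\infty}}\,\|u_2\|_{L^{p_\infty}(\mathbb{R}^N)}.
\]
Adding and setting $M(E):=\max\{1,|E|^{1/p_0-1/p_\infty}\}$ one obtains
\[
\|u\|_{L^{p_\ast}(E)}\leq 2M(E)\,\max\bigl\{\|u_1\|_{L^{p_0}(\mathbb{R}^N)},\|u_2\|_{L^{p_\infty}(\mathbb{R}^N)}\bigr\}.
\]

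Taking the infimum over all admissible decompositions $u=u_1+u_2$ yields $\|u\|_{L^{p_\ast}(E)}\leq 2M(E)\,\|u\|_{L^{p_0}+L^{p_\infty}}$, and combining with the continuous embedding from the previous lemma gives $\|u\|_{L^{p_\ast}(E)}\leq C(E,N,R,a_0,a_\infty)\,\|u\|_A$, as desired. No step here is really an obstacle; the only minor point to check is that the infimum definition of the sum norm behaves well with the sum of two separately estimated pieces, which is immediate from the definition of $\|\cdot\|_{\mathcal L}$. The whole argument is essentially a one-line consequence of the previous lemma once the H\"older localization on $E$ is in place.
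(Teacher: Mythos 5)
Your argument is correct and follows essentially the same route as the paper: reduce to the continuous embedding $D_A\hookrightarrow L^{p_0}+L^{p_\infty}$ of the previous lemma and then use that the sum space embeds continuously into $L^{p_*}(E)$ when $|E|<+\infty$. The only difference is that the paper cites Proposition 2.17~ii) of \cite{BPR} for this last fact, whereas you prove it directly via H\"older's inequality and the infimum definition of the sum norm, which is exactly the content of that citation.
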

\begin{proof}
This derives from the previous lemma together with prop. 2.17 $ii)$ of \cite{BPR}. \end{proof}

\bigskip

\noindent We now define another function space.

\bigskip

\begin{defin}
$$ X = \left\{ u \in D_{A} \, : \, \int_{\mathbb{R}^{N}} V(|x|) \, | u |^2 \, dx < +\infty \right\}.$$

\noindent with norm

$$ ||u||^2 = ||u||_{X}^2 = \int_{\mathbb{R}^{N}} A(|x|) \, |\nabla u |^2 + \int_{\mathbb{R}^{N}} V(|x|) \, | u |^2 \, dx .$$

\end{defin}

\bigskip
\bigskip

\noindent $X$ is an Hilbert spaces with respect to the norm $ ||\cdot ||_X$. We will write $(u|v) $ for the scalar product in $X$, that is

$$(u|v) = \int_{\mathbb{R}^{N}} A(|x|) \, \nabla u \cdot \nabla v \, dx + \int_{\mathbb{R}^{N}} V(|x|) \, u \, v \, dx .$$

\par \noindent We look for weak solutions of equation (\ref{EQ}) in the space $X$. This means that a solution (\ref{EQ}) is a function $u \in X$ such that, for all $h \in X$, it holds 

$$\int_{\mathbb{R}^{N}} A(|x|) \nabla u \nabla h \, dx + \int_{\mathbb{R}^{N}} V(|x|) u \, h dx -  \int_{\mathbb{R}^{N}} K(|x|) \, f(u) h \,dx =0 .$$

\noindent We will obtain such weak solutions by standard variational methods, that is we will introduce (in section 5) a functional on $X$ whose critical points are weak solutions. To get such critical points we need, as usual, some compactness properties for the functional, which we will derive from compactness of suitable embeddings. So in the following sections we will prove that the space $X$ is compactly embedded in $L_{K}^{q_1} + L_{K}^{q_2}$, for suitable $q_1 , q_2$. 
The following lemma is a step to obtain these compact embeddings.

\bigskip

\begin{lem}
\label{A5}

Assume $[A]$, $[V]$, $[K]$. Let $a=\mathrm{max}\{a_{0},a_{\infty}\}$ and take $1<q<\infty$, $0<r<R$. Then there exists a constant $C=C(N,a_{0}, a_{\infty},r,R,q,s)>0$ such that, for any $u , h\in X $ we have
$$
\frac{\int_{B_{R}\setminus B_{r}}K(|x|)|u|^{q-1}|h|dx}{C\|K(|\cdot|)\|_{L^{s}(B_{R}\setminus B_{r})}}\leq\begin{cases}
\left(\int_{B_{R}\setminus B_{r}}|u|^{2}dx\right)^{\frac{q-1}{2}}\|h\| & if\:q\leq\tilde{q}\\
\left(\int_{B_{R}\setminus B_{r}}|u|^{2}dx\right)^{\frac{\tilde{q}-1}{2}}\|u\|^{q-\tilde{q}}\|h\| & if\:q>\tilde{q}
\end{cases}
$$

where $\tilde{q}=2\left(1+\frac{1}{N}-\frac{1}{s}\right)-\frac{a}{N}$. \end{lem}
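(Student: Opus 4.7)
The approach is to apply the three-function H\"older inequality on the annulus $B_R\setminus B_r$, distributing $K\cdot |u|^{q-1}\cdot |h|$ so that $K$ belongs to $L^s$ (by hypothesis $[K]$) and $h$ belongs to $L^{p_*}$, the space in which $X$ continuously embeds by Corollary \ref{C1}; the remaining exponent $\alpha$ for $|u|^{q-1}$ is then forced by $1/s + 1/\alpha + 1/p_* = 1$. Using $p_* = 2N/(N+a-2)$, a direct computation gives $\alpha = 2/(\tilde q - 1)$, so $\tilde q$ is exactly the exponent for which $(\tilde q - 1)\alpha = 2$. One checks $\tilde q > 1$ from the lower bound on $s$ in $[K]$ and $\tilde q < 3$ (hence $\alpha > 1$) from $a > 2 - N$, so H\"older applies and yields
\[
\int_{B_R\setminus B_r} K|u|^{q-1}|h|\,dx \leq \|K\|_{L^s(B_R\setminus B_r)}\Bigl(\int_{B_R\setminus B_r}|u|^{(q-1)\alpha}\,dx\Bigr)^{1/\alpha}\|h\|_{L^{p_*}(B_R\setminus B_r)},
\]
with $\|h\|_{L^{p_*}(B_R\setminus B_r)}\leq C\|h\|$ by Corollary \ref{C1} together with $\|h\|_{D_A}\leq\|h\|$.

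The case distinction in the statement matches the sign of $(q-1)\alpha - 2$. When $q\leq\tilde q$ one has $(q-1)\alpha\leq 2$, so on the finite-measure annulus either Jensen's inequality (if $(q-1)\alpha<1$) or H\"older with exponent $2/((q-1)\alpha)$ (if $1\leq (q-1)\alpha\leq 2$) gives
\[
\Bigl(\int_{B_R\setminus B_r}|u|^{(q-1)\alpha}\,dx\Bigr)^{1/\alpha}\leq C\Bigl(\int_{B_R\setminus B_r}|u|^2\,dx\Bigr)^{(q-1)/2},
\]
and combining with the previous display produces the first estimate of the statement.

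When $q>\tilde q$, so that $(q-1)\alpha>2$, I would split $|u|^{(q-1)\alpha}=|u|^2\cdot|u|^{(q-\tilde q)\alpha}$ and control the second factor pointwise. Lemma \ref{A1} applied with $R$ replaced by $r$ gives, for $u\in S_A$ and $|x|\geq r$, $|u(x)|\leq C(N,r,a_\infty)|x|^{-(N+a_\infty-2)/2}\|u\|_A$, and since $N+a_\infty-2>0$ the supremum on $B_R\setminus B_r$ is attained near $|x|=r$, yielding the uniform estimate $\|u\|_{L^\infty(B_R\setminus B_r)}\leq C(N,r,R,a_\infty)\|u\|$. This bound extends from $S_A$ to $u\in X\subset D_A$ by the standard density argument (approximate in $\|\cdot\|_A$, pass to a pointwise a.e.\ subsequence via the $L^{p_*}$ convergence of Corollary \ref{C1}). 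Substituting, one obtains $\int|u|^{(q-1)\alpha}\,dx\leq C\|u\|^{(q-\tilde q)\alpha}\int|u|^2\,dx$, and raising to the $1/\alpha$-th power together with $1/\alpha=(\tilde q-1)/2$ gives the second estimate.

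The only genuinely delicate point is the extension of the pointwise bound from $S_A$ to $X$, but the authors have already signaled that such density arguments are routine and left to the reader; everything else is bookkeeping on H\"older exponents guided by the identity $(\tilde q - 1)\alpha = 2$ which the definition of $\tilde q$ is designed to satisfy.
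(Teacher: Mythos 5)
Your proof is correct and follows essentially the same route as the paper: the paper's two-step iterated H\"older (first with $p_*$ and its conjugate $\sigma$, then with $s/\sigma$) is exactly your single three-function H\"older with $1/s+1/\alpha+1/p_*=1$, and the identity $\alpha=2/(\tilde q-1)$, the H\"older/Jensen step on the finite-measure annulus for $q\leq\tilde q$, and the pointwise $L^\infty$ bound from Lemma \ref{A1} for $q>\tilde q$ all match the paper's argument. Your explicit remark on extending the pointwise estimate from $S_A$ to $X$ by density is a point the paper passes over silently, and is handled correctly.
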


\begin{proof}
We denote with $\sigma$ the conjugate exponent of $p_{*}$, i.e. 
$\sigma=\frac{2N}{N-a+2} = \max \left \{ \frac{2N}{N-a_{0}+2}, \frac{2N}{N-a_{\infty}+2}  \right \}$. Thanks to H\"older's inequality, initially applied with $p_{*}>1$ and then with $\frac{s}{\sigma}>1$, we obtain
$$
\int_{B_{R}\setminus B_{r}}K(|x|)|u|^{q-1}|h|dx
$$

$$
\leq\left(\int_{B_{R}\setminus B_{r}}K(|x|)^{\sigma}|u|^{(q-1)\sigma}dx\right)^{\frac{1}{\sigma}}\left(\int_{B_{R}\setminus B_{r}}|h|^{p_{*}}dx\right)^{\frac{1}{p_{*}}}
$$

$$
\leq\left(\left(\int_{B_{R}\setminus B_{r}}K(|x|)^{s}dx\right)^{\frac{\sigma}{s}}\left(\int_{B_{R}\setminus B_{r}}|u|^{(q-1)\sigma(\frac{s}{\sigma})'}dx\right)^{\frac{1}{(\frac{s}{\sigma})'}}\right)^{\frac{1}{\sigma}} C \|h\|
$$

$$
\leq C \|K(|\cdot|)\|_{L^{s}(B_{R}\setminus B_{r})}\|h\| \left(\int_{B_{R}\setminus B_{r}}|u|^{2\frac{q-1}{\tilde{q}-1}}dx\right)^{\frac{\tilde{q}-1}{2}},
$$
\noindent where we use the following computations
$$
\frac{1}{\left(\frac{s}{\sigma}\right)^{'}}=1-\frac{\sigma}{s}=\frac{s(N-a+2)-2N}{s(N-a+2)}$$

\noindent hence
$$
\left(\frac{s}{\sigma}\right)^{'}=\frac{s(N-a+2)}{s(N-a+2)-2N}
$$
\noindent so that

$$
\sigma\left(\frac{s}{\sigma}\right)^{'}=\frac{2N}{N-a+2}\frac{s(N-a+2)}{s(N-a+2)-2N}=2\frac{sN}{sN+2s-2N-as}=\frac{2}{\tilde{q}-1}.
$$

\begin{itemize}

\item If $q=\tilde{q}$, the proof is over. 
\item If $q<\tilde{q}$, we apply H\"older's inequality again, with conjugate exponents $\frac{\tilde{q}-1}{q-1}>1$, $\frac{\tilde{q}-1}{\tilde{q}-q}$, obtaining
$$
\int_{B_{R}\setminus B_{r}}K(|x|)|u|^{q-1}|h|dx
$$

$$
\leq C \, \|K(|\cdot|)\|_{L^{s}(B_{R}\setminus B_{r})}\|h\|\left(|B_{R}\setminus B_{r}|^{\frac{\tilde{q}-q}{\tilde{q}-1}}\left(\int_{B_{R}\setminus B_{r}}|u|^{2}dx\right)^{\frac{q-1}{\tilde{q}-1}}\right)^{\frac{\tilde{q}-1}{2}}
$$

$$
=C\, \|K(|\cdot|)\|_{L_{s}(B_{R}\setminus B_{r})}\|h\|   \left(\int_{B_{R}\setminus B_{r}}|u|^{2}dx\right)^{\frac{q-1}{2}}.
$$
\item If $q>\tilde{q}$, we have $\frac{q-1}{\tilde{q}-1}>1$.
Thanks to Lemma  \ref{A1} we obtain

$$
\int_{B_{R}\setminus B_{r}}K(|x|)|u|^{q-1}|h|dx
$$

$$
\leq C \|K(|\cdot|)\|_{L^{s}(B_{R}\setminus B_{r})}\|h\|\left(\int_{B_{R}\setminus B_{r}}|u|^{2\frac{q-1}{\tilde{q}-1}-2}|u|^{2}dx\right)
$$

$$
\leq  C\|K(|\cdot|)\|_{L^{s}(B_{R}\setminus B_{r})}\|h\|\left(\left(\frac{C\|u\|}{r^{\frac{N+a_{\infty}-2}{2}}}\right)^{2\frac{q-\tilde{q}}{\tilde{q}-1}}\int_{B_{R}\setminus B_{r}}|u|^{2}dx\right)^{\frac{\tilde{q}-1}{2}}
$$

$$
\leq C \|K(|\cdot|)\|_{L^{s}(B_{R}\setminus B_{r})}\|h\|   \|u\|^{q-\tilde{q}}   \left(\int_{B_{R}\setminus B_{r}}|u|^{2}dx\right)^{\frac{\tilde{q}-1}{2}}.
$$
\end{itemize} In all the previous computations, $C$ may mean different positive constants, depending only on $N,a_{0}, a_{\infty}, r, R, q,s$.
\end{proof}

\bigskip

\begin{lem}
\label{A6}

Consider $A$ satisfying the hypothesis $[A]$. Let $\Omega $ be a smooth bounded open set such that $\overline{\Omega}\subset \mathbb{R}^{N}\setminus\{0\}$. Then the embedding

$$
D_{A} \hookrightarrow L^{2}(\Omega)
$$

\noindent is continuous and compact.
\end{lem}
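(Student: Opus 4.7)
The plan is to reduce to the classical Rellich--Kondrachov theorem on a smooth bounded annular region avoiding the origin, where the weight $A$ is bounded below by a positive constant and the elements of $D_A$ are genuine $H^1$ functions.

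Since $\overline{\Omega}$ is compact and does not contain $0$, I would first pick radii $0<r<R$ with $\overline{\Omega} \subset B_R \setminus \overline{B_r}$, and let $U$ denote the smooth bounded open set $B_{R+1} \setminus \overline{B_{r/2}}$ (any smooth annulus containing $\overline{\Omega}$ and avoiding the origin works). By the continuity of $A$ on the compact set $\overline{U}$ together with hypothesis $[A]$, there exists $c_U>0$ with $A(|x|) \geq c_U$ on $U$. Consequently, for every $u \in D_A$, the lemma on weak derivatives gives $\nabla u \in L^2_{\mathrm{loc}}(\mathbb{R}^N\setminus\{0\})$ and
\begin{equation*}
\int_U |\nabla u|^2\,dx \;\leq\; \frac{1}{c_U}\int_U A(|x|)\,|\nabla u|^2\,dx \;\leq\; \frac{1}{c_U}\,\|u\|_A^2.
\end{equation*}
On the other hand, $U$ has finite measure, so by Corollary \ref{C1} we have $\|u\|_{L^{p_*}(U)} \leq C\|u\|_A$, and since $p_* \geq 2$, H\"older's inequality yields $\|u\|_{L^2(U)} \leq C'\|u\|_A$. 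Combining these two estimates shows that the inclusion $D_A \hookrightarrow H^1(U)$ is continuous, which in particular gives continuity of $D_A \hookrightarrow L^2(\Omega)$ (since $\Omega \subset U$).

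For compactness, let $\{u_n\}_n$ be a bounded sequence in $D_A$. By the previous step, $\{u_n\}_n$ is bounded in $H^1(U)$. Since $U$ is a smooth bounded open set, the classical Rellich--Kondrachov theorem provides a subsequence converging strongly in $L^2(U)$, and restricting to $\Omega \subset U$ gives strong convergence in $L^2(\Omega)$. This proves compactness of $D_A \hookrightarrow L^2(\Omega)$.

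The only minor subtlety, which I would state carefully, is that elements of $D_A$ are a priori only defined as sums in $L^{p_0}+L^{p_\infty}$ and their weak derivatives exist only on $\mathbb{R}^N\setminus\{0\}$; this is exactly why one must work on an annulus $U$ strictly separated from the origin, rather than, say, a ball centered at $0$. Once this choice is made, the argument is just the combination of a positive lower bound on $A$, the $L^{p_*}$ estimate of Corollary \ref{C1}, and the standard Rellich--Kondrachov theorem, with no genuine obstacle.
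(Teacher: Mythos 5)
Your proof is correct and follows essentially the same route as the paper's: a positive lower bound for $A$ on a compact set away from the origin gives an $H^1$ bound, Corollary \ref{C1} (with $p_*\geq 2$) gives the $L^2$ bound, and the classical Rellich theorem gives compactness. The only cosmetic difference is that you pass through an auxiliary annulus $U$, whereas the paper applies the identical argument directly on $\Omega$, which is already assumed smooth, bounded, and with closure separated from the origin.
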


\begin{proof}
The continuity of the embedding is obvious thanks to Corollary \ref{C1} and the fact that $p_* \geq 2$. 
We prove now the compactness of the embedding. Let $\{u_{n}\}_n$ be a bounded sequence in
$D_{A} $. By continuity of the embedding we obtain

$$
\|u_n \|_{L^{2}(\Omega)}\leq C.
$$

\noindent Moreover as the function $A(x)$ is continuous and strictly positive in the compact set $\overline {\Omega }$, there holds

$$
\int_{\Omega} |\nabla u_n |^2 dx \leq C \int_{\Omega} A(|x|) |\nabla u_n |^2 dx
\leq C ||u_n ||_{D_{A}}^2\leq C.
$$

\noindent Thus, $\{u_{n}\}_n$ is bounded also in the space $H^{1}(\Omega)$. Thanks to Rellich's Theorem, $\{u_{n}\}_n$ has a convergent subsequence in $L^{2}(\Omega)$, and this gives our thesis.
\end{proof}

\bigskip
\bigskip

\section{Some results on embeddings}

Following \cite{BGRcomp} we now introduce some new functions, whose study will help us in getting conditions for compactness.

\begin{defin}
For $q>1$ and $R>0$ define

$$
\mathcal{S}_{0}(q,R):=\sup_{u\in X,\|u\| =1}\int_{B_{R}}K(|x|)|u|^{q}dx
$$

$$
\mathcal{S}_{\infty}(q,R):=\sup_{u\in X,\|u\| =1}\int_{B_{R}^{c}}K(|x|)|u|^{q}dx.
$$

\end{defin}

\bigskip

\begin{thm}

Assume $N\geq3$ and $[A]$, $[V],$ $[K]$. Take $q_{1},q_{2}>1$.
\begin{enumerate}

\item If
\begin{equation}
\mathcal{S}_{0}(q_{1},R_{1})<\infty\;\;and \;\;\mathcal{S}_{\infty}(q_{2},R_{2})<\infty\;\;for\; some \;R_{1},R_{2}>0,\label{eq:27}
\end{equation}
then 
$$X \hookrightarrow L_{K}^{q_{1}}(\mathbb{R^{\mathit{N}}})+L_{K}^{q_{2}}(\mathbb{R^{\mathit{N}}}), $$

\noindent with continuous embedding.

\item If
\begin{equation}
\lim_{R\rightarrow 0^{+}}\mathcal{S}_{0}(q_{1},R)=\lim_{R\rightarrow+\infty}\mathcal{S}_{\infty}(q_{2},R)=0,\label{eq:28}
\end{equation}
then the embedding of $X$ into $L_{K}^{q_{1}}(\mathbb{R^{\mathit{N}}})+L_{K}^{q_{2}}(\mathbb{R^{\mathit{N}}})$ is compact.

\end{enumerate}
\end{thm}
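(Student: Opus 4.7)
The plan is to deduce both statements from the definitions of $\mathcal{S}_0$ and $\mathcal{S}_\infty$, combined with the annular control from Lemma~\ref{A5} and the compact embedding of Lemma~\ref{A6}. The common idea in both parts is a decomposition $u = u\chi_{B_R} + u\chi_{B_R^c}$ at a suitable dividing radius $R$: the hypotheses control the near-origin and near-infinity contributions directly, while any intermediate annulus is absorbed by Lemmas~\ref{A5} and \ref{A6}.

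For part (1), I would fix $R_1,R_2$ as in (\ref{eq:27}) and, depending on whether $R_1 \geq R_2$ or $R_1 < R_2$, split at $R = R_1$ or at $R = R_2$. In the first case, $\int_{B_{R_1}} K|u|^{q_1}\,dx \leq \mathcal{S}_0(q_1,R_1)\|u\|^{q_1}$ holds by definition, and $\int_{B_{R_1}^c} K|u|^{q_2}\,dx \leq \int_{B_{R_2}^c} K|u|^{q_2}\,dx \leq \mathcal{S}_\infty(q_2,R_2)\|u\|^{q_2}$ because $B_{R_1}^c \subset B_{R_2}^c$. In the second case, the tail piece is controlled directly by $\mathcal{S}_\infty(q_2,R_2)$, while the head piece further decomposes as $B_{R_1} \cup (B_{R_2}\setminus B_{R_1})$; the annular contribution is estimated by Lemma~\ref{A5} applied with $h = u$ and $q = q_1$, producing a bound of the form $C\|u\|^{q_1}$. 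The definition of the sum-space norm then yields $\|u\|_{\mathcal{L}_K} \leq C\|u\|$.

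For part (2), let $\{u_n\}\subset X$ be bounded. By Hilbert-space reflexivity I pass to a subsequence with $u_n \rightharpoonup u$ in $X$, set $v_n := u_n - u$, and must show $v_n \to 0$ in $L_K^{q_1} + L_K^{q_2}$. Given $\varepsilon > 0$, hypothesis (\ref{eq:28}) lets me fix $R_1$ so small and $R_2 > R_1$ so large that $\mathcal{S}_0(q_1,R_1) < \varepsilon$ and $\mathcal{S}_\infty(q_2,R_2) < \varepsilon$. I split $v_n = v_n\chi_{B_{R_2}} + v_n\chi_{B_{R_2}^c}$. The tail has $L_K^{q_2}$-norm at most $(\mathcal{S}_\infty(q_2,R_2)\|v_n\|^{q_2})^{1/q_2}$, which is of order $\varepsilon^{1/q_2}$ uniformly in $n$. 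For the head, the $L_K^{q_1}$-integral splits into a part on $B_{R_1}$ (bounded by $\mathcal{S}_0(q_1,R_1)\|v_n\|^{q_1}$, hence of order $\varepsilon$) and a part on the annulus $B_{R_2}\setminus B_{R_1}$, which Lemma~\ref{A5} bounds by a positive power of $\|v_n\|_{L^2(B_{R_2}\setminus B_{R_1})}$ times a power of $\|v_n\|$. By Lemma~\ref{A6} the $L^2$-norm on the annulus tends to zero as $n \to \infty$, since $v_n \rightharpoonup 0$ in $D_A$ and $D_A$ embeds compactly into $L^2$ on sets bounded away from the origin. Putting everything into the definition of the sum-space norm, $\|v_n\|_{\mathcal{L}_K}$ is bounded by $C\varepsilon^{1/\max(q_1,q_2)} + o(1)$; since $\varepsilon$ was arbitrary, $v_n \to 0$ in $L_K^{q_1} + L_K^{q_2}$.

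The main obstacle I expect is precisely the treatment of the intermediate annulus that appears whenever $R_1 \neq R_2$ in part (1), or throughout part (2): the $\mathcal{S}_0,\mathcal{S}_\infty$ hypotheses give no information on such regions, so one must use Lemma~\ref{A5} to convert $K$-weighted $L^q$ integrals on annuli into unweighted $L^2$ integrals plus an $X$-norm factor, and Lemma~\ref{A6} to turn those $L^2$ integrals into genuinely vanishing quantities along weakly convergent subsequences. Once those two tools are invoked on the annulus, everything else reduces to a routine manipulation of the infimum defining $\|\cdot\|_{\mathcal{L}_K}$.
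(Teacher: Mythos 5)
Your proposal is correct and follows essentially the same route as the paper: the definitions of $\mathcal{S}_0$ and $\mathcal{S}_\infty$ control the ball near the origin and the exterior region, Lemma \ref{A5} (with $h=u$) combined with Lemma \ref{A6} absorbs the intermediate annulus, and the sum-space norm (resp.\ the characterization of convergence in $L_K^{q_1}+L_K^{q_2}$) finishes both parts. The only cosmetic difference is that the paper invokes monotonicity of $\mathcal{S}_0,\mathcal{S}_\infty$ to reduce to $R_1<R_2$ rather than treating the two orderings separately, and phrases compactness directly for sequences $u_n\rightharpoonup 0$ instead of subtracting the weak limit.
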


\begin{proof}

As to $1$, we remark that $\mathcal{S}_{0}$ and $\mathcal{S}_{\infty}$ are monotone, so it is not restrictive to assume $R_{1}<R_{2}$.
Assume $u\in X,\;u\neq0$, then
\begin{equation}
\int_{B_{R_{1}}}K(|x|)|u|^{q_{1}}dx=\|u\|^{q_{1}}\int_{B_{R_{1}}}K(|x|)\frac{|u|^{q_{1}}}{\|u\|^{q_{1}}}dx\leq\|u\|^{q_{1}}\mathcal{S}_{0}(q_{1},R_{1})\label{eq:8}
\end{equation}
and, in the same way,

\begin{equation}
\int_{B_{R_{2}}^{c}}K(|x|)|u|^{q_{2}}dx\leq\|u\|^{q_{2}}\mathcal{S}_{\infty}(q_{2},R_{2}).\label{eq:9}
\end{equation}

\noindent Here $||u|| = ||u||_X $ is the norm in $X$.

\noindent Using lemma \ref{A5} (with $h=u$) and lemma \ref{A6} we obtain, for a suitable $C>0$,
independent from $u$, 
\begin{equation}
\int_{B_{R_{2}}\setminus B_{R_{1}}}K(|x|)|u|^{q_{1}}dx\leq C\|u\|^{q_{1}}.\label{eq:10}
\end{equation}

\noindent Hence $u\in L_{K}^{q_{1}}(B_{R_{2}})\cap L_{K}^{q_{2}}(B_{R_{2}}^{c})$
so, by proposition 2.3 in \cite{BPR}, $u\in L_{K}^{q_{1}}+L_{K}^{q_{2}}.$
Moreover, if $u_{n}\rightarrow 0$ in $X$, then, thanks to (\ref{eq:8}), (\ref{eq:9}) and (\ref{eq:10}), we obtain

$$
\int_{B_{R_{2}}}K(|x|)|u_{n}|^{q_{1}}dx+\int_{B_{R_{2}}^{c}}K(|x|)|u_{n}|^{q_{2}}dx \rightarrow 0 \quad \mbox{as} \, \, n\rightarrow\infty .
$$

\noindent It follows that $u_{n}\rightarrow 0$ in $L_{K}^{q_{1}}+L_{K}^{q_{2}}$ (see proposition 2.7 in \cite{BPR}).
\par \noindent As to $2$, we assume hypothesis (\ref{eq:28}) and let $u_{n}\rightharpoonup 0$ in $X$. Then, $\{u_{n}\}$ is bounded in $X$. Thanks to (\ref{eq:8}) and (\ref{eq:9}) we get that, for a fixed 
$\varepsilon>0$, it is possible to obtain $R_{\epsilon}$ and $r_{\epsilon}$ such that $R_{\epsilon}>r_{\epsilon}>0$ and for all $n\in\mathbb{N},$
$$
\int_{B_{r_{\epsilon}}}K(|x|)|u_{n}|^{q_{1}}dx\leq\|u_{n}\|^{q_{1}}\mathcal{S}_{0}(q_{1},r_{\epsilon})\leq\sup_{n}\|u_{n}\|^{q_{1}}\, \mathcal{S}_{0}(q_{1},r_{\epsilon})<\frac{\epsilon}{3}
$$
\noindent and
$$
\int_{B_{R\epsilon}^{c}}K(|x|)|u_{n}|^{q_{2}}dx\leq\sup_{n}\|u_{n}\|^{q_{2}}\, \mathcal{S}_{\infty}(q_{1},R_{\epsilon})<\frac{\epsilon}{3}.
$$

\noindent Thanks to Lemma \ref{A5} and to the boundedness of $\{u_{n}\}_n$ in $X$,
there exist two constants $C, \mu>0$, independent from $n$, such tat
$$
\int_{B_{R\epsilon}\setminus B_{r_{\epsilon}}}K(|x|)|u_{n}|^{q_{2}}dx\leq C\left(\int_{B_{R\epsilon}\setminus B_{r_{\epsilon}}}|u_{n}|^{2}dx\right)^{\mu}\rightarrow 0 \quad \mbox{as} \, \, n\rightarrow\infty ,
$$

\noindent thanks to Lemma \ref{A6}. For $n$ big enough we have 
$$
\int_{B_{R\epsilon}}K(|x|)|u_{n}|^{q_{1}}dx+\int_{B_{R\epsilon}^{c}}K(|x|)|u_{n}|^{q_{2}}dx<\epsilon .
$$

\noindent  From this, thanks to proposition 2.7 in \cite{BPR}, we get $u_{n}\rightarrow 0$ in $L_{K}^{q_{1}}+L_{K}^{q_{2}}$.
\end{proof}

\bigskip
\bigskip

\section{Compactness of embeddings}

We start this section proving the following two lemmas, which give the most important technical steps for our compactness results. For future purposes, these two lemmas are stated in a form which is a little more general than needed in the present paper.

\smallskip{}

\begin{lem}
\label{A7} Let $N\geq3$ and $R>0$. Assume $[A]$, $[V],$ $[K]$. If we assume
$$
\Lambda:=\mathsf{\mathnormal{\mathbf{\mathrm{ess\!\sup_{\mathit{x\in B_{R}}}}}}}\frac{K(|x|)}{|x|^{\alpha}V(|x|)^{\beta}}<+\infty\quad for\;some\;0\leq\beta\leq1\;and\;\alpha\in\mathbb{R},
$$

\noindent then, $\forall u,h \in X$ and $\forall q>\mathrm{max}\{1,2\beta\}$ we have
$$
\int_{B_{R}}K(|x|)|u|^{q-1}|h|dx
$$

$$
\leq\begin{cases}
\Lambda ||u||^{q-1}\, C \, \left(\int_{B_{R}}|x|^{\frac{\alpha-\nu(q-1)}{N-a_{0}+2(1-2\beta+a_{0}\beta)}2N}dx\right)^{\frac{N-a_{0}+2(1-2\beta+a_{0}\beta)}{2N}}\|h\| & 0\leq\beta\leq\frac{1}{2}\\
\Lambda ||u||^{q-1}  \, C \, \left(\int_{B_{R}}|x|^{\frac{\alpha-\nu(q-2\beta)}{1-\beta}}dx\right)^{1-\beta} \|h\| & \frac{1}{2}<\beta<1\\
\Lambda ||u||^{q-2}  \, C \, \left(\int_{B_{R}}|x|^{2\alpha-2\nu(q-2)}V(|x|)|u|^{2}dx\right)^{\frac{1}{2}}\|h\| & \beta=1
\end{cases}
$$

\noindent where $\nu:=\frac{N+a_{0}-2}{2}$ and $C=C (N,R, a_0, a_{\infty})$. \end{lem}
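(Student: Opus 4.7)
The plan is built on two ingredients. First, the definition of $\Lambda$ immediately gives the pointwise bound $K(|x|)\leq\Lambda|x|^\alpha V(|x|)^\beta$ a.e.\ on $B_R$. Second, every $u\in X\subset D_A$ is radial, so by approximation through $S_A$ combined with the cutoff argument already used in the proof of Lemma~\ref{A4} applied to Lemma~\ref{A2}, one obtains the pointwise radial estimate $|u(x)|\leq C|x|^{-\nu}\|u\|$ for $|x|\leq R$ and any $u\in X$, with $\nu=(N+a_0-2)/2$. The three cases in the thesis then correspond to three different decisions on how many powers of $|u|$ to dispose of pointwise and how to redistribute the remaining $V^\beta$ between $|u|$ and $|h|$.

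For $0\leq\beta\leq 1/2$, I would bound the full factor $|u|^{q-1}$ pointwise (legitimate since $q>1$), producing $C\|u\|^{q-1}|x|^{-\nu(q-1)}$ and reducing the integrand to $|x|^{\alpha-\nu(q-1)}V^\beta|h|$. Writing $V^\beta|h|=(V|h|^2)^\beta\cdot|h|^{1-2\beta}$ (well defined because $1-2\beta\geq 0$) and applying three-fold H\"older with exponents $1/\beta$, $p_*/(1-2\beta)$, and $p_3$ determined by $1/p_3=1-\beta-(1-2\beta)/p_*$, a direct computation shows $1/p_3=[N-a_0+2(1-2\beta+a_0\beta)]/(2N)$, which is exactly the outer exponent in the thesis. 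The three factors are controlled respectively by $\|h\|^{2\beta}$ (using $\int V|h|^2\leq\|h\|^2$), by $C\|h\|^{1-2\beta}$ (using Corollary~\ref{C1} on the finite-measure set $B_R$), and by the desired weight integral. The degenerate endpoints $\beta=0$ (two-fold H\"older) and $\beta=1/2$ (where $|h|^{1-2\beta}=1$) are covered by the same formula.

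For $1/2<\beta<1$ the hypothesis $q>2\beta$ is precisely what permits killing $|u|^{q-2\beta}$ pointwise, producing $C\|u\|^{q-2\beta}|x|^{-\nu(q-2\beta)}$. The algebraic heart of the argument is the factorisation $V^\beta|u|^{2\beta-1}|h|=(V|u|^2)^{\beta-1/2}(V|h|^2)^{1/2}$, whose right-hand side has both exponents positive precisely because $\beta>1/2$; a three-fold H\"older with exponents $1/(\beta-1/2)$, $2$, $1/(1-\beta)$ (whose reciprocals sum to $1$) then yields $(\int V|u|^2)^{\beta-1/2}\leq\|u\|^{2\beta-1}$, $(\int V|h|^2)^{1/2}\leq\|h\|$, and the pure weight integral $\bigl(\int_{B_R}|x|^{(\alpha-\nu(q-2\beta))/(1-\beta)}dx\bigr)^{1-\beta}$, while $\|u\|^{q-2\beta}\cdot\|u\|^{2\beta-1}=\|u\|^{q-1}$ rebuilds the required power on $\|u\|$. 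The case $\beta=1$ is a streamlined version: kill $|u|^{q-2}$ pointwise (since $q>2$), then apply a single Cauchy--Schwarz pairing $V^{1/2}|h|$ against $|x|^{\alpha-\nu(q-2)}V^{1/2}|u|$ to reach the thesis directly.

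The main obstacle, and the only step that requires real insight, is the decomposition $V^\beta|u|^{2\beta-1}|h|=(V|u|^2)^{\beta-1/2}(V|h|^2)^{1/2}$ used in the middle case: it is what makes the three H\"older exponents $1/(\beta-1/2),\,2,\,1/(1-\beta)$ conspire to sum to $1$ and to produce the right final power of $\|u\|$ together with a single $\|h\|$. Once this is spotted (and the analogous splittings at the endpoints $\beta=1/2$ and $\beta=1$ are recorded), the rest of the proof is book-keeping with H\"older and the radial pointwise estimate, and the explicit form of the constant $C=C(N,R,a_0,a_\infty)$ follows by tracking the constants in Lemma~\ref{A2} and Corollary~\ref{C1}.
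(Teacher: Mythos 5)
Your proposal is correct and follows essentially the same route as the paper's proof: the pointwise decay estimate of Lemma \ref{A2} (extended to functions of $X$ by the cutoff-and-density argument of Lemma \ref{A4}) is used to dispose of $|u|^{q-1}$, $|u|^{q-2\beta}$ or $|u|^{q-2}$ according to the range of $\beta$, and the rest is H\"older with $\beta$-dependent exponents; your single three-fold H\"older applications are merely a repackaging of the paper's nested two-fold ones. The one slip to fix is in the case $0\leq\beta\leq\frac12$: the exponent you must use for $h$ is $p_{0}=\frac{2N}{N+a_{0}-2}$ with the bound $\|h\|_{L^{p_{0}}(B_{R})}\leq C\|h\|$ coming from estimate (\ref{eq:333-1}), not $p_{*}$ via Corollary \ref{C1} --- when $a_{0}<a_{\infty}$ one has $p_{*}=p_{\infty}<p_{0}$, and your identity $1/p_{3}=1-\beta-(1-2\beta)/p_{*}=\frac{N-a_{0}+2(1-2\beta+a_{0}\beta)}{2N}$ only holds with $p_{0}$ in place of $p_{*}$.
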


\begin{proof}

We study the various cases separately:

\begin{enumerate}
\item

If $\beta=0$, we apply H\"older's inequality with conjugate exponents $p_{0}={\frac{2N}{N+a_{0}-2}}$, ${\frac{2N}{N-a_{0}+2}}$. We apply also 
(\ref{eq:333-1}) and Lemma \ref{A2} and we get

$$
\frac{1}{\Lambda}\int_{B_{R}}K(|x|)|u|^{q-1}|h|dx\leq\int_{B_{R}}|x|^{\alpha}|u|^{q-1}|h|dx
$$

$$
\leq\left(\int_{B_{R}}\left(|x|^{\alpha}|u|^{q-1}\right)^{\frac{2N}{N-a_{0}+2}}dx
\right)^{\frac{N-a_{0}+2}{2N}}   \left(\int_{B_{R}}|h|^{p_{0}}dx\right)^{\frac{1}{p_{0}}}
$$

$$
\leq ||u||^{q-1} \, C \, \left(\int_{B_{R}}|x|^{\frac{\alpha-\nu(q-1)}{N-a_{0}+2}2N}dx\right)^{\frac{N-a_{0}+2}{2N}}\|h\|.
$$

\item If $0<\beta<1/2$ then it is possible to apply H\"older's inequality with conjugate exponents $\frac{1}{\beta}$, $\frac{1}{1-\beta}$:
$$
\frac{1}{\Lambda}\int_{B_{R}}K(|x|)|u|^{q-1}|h|dx
$$

$$
\leq\int_{B_{R}}|x|^{\alpha}V(|x|)^{\beta}|u|^{q-1}|h|dx=\int_{B_{R}}|x|^{\alpha}|u|^{q-1}|h|^{1-2\beta}V(|x|)^{\beta}|h|^{2\beta}dx
$$

$$
\leq\left(\int_{B_{R}}\left(|x|^{\alpha}|u|^{q-1}|h|^{1-2\beta}\right)^{\frac{1}{1-\beta}}dx\right)^{1-\beta}\left(\int_{B_{R}}V(|x|)|h|^{2}dx\right)^{\beta}
$$

$$
\leq\left(\int_{B_{R}}\left(|x|^{\alpha}|u|^{q-1}|h|^{1-2\beta}\right)^{\frac{1}{1-\beta}}dx\right)^{1-\beta}\|h\|^{2\beta}.
$$

\noindent We apply again H\"older's inequality with exponent $\frac{1-\beta}{1-2\beta}\, p_{0} > 1$. Its conjugate exponent is given through the formula

$$
\frac{1}{\left(\frac{1-\beta}{1-2\beta}p_{0}\right)^{'}}=1-\frac{1-2\beta}{p_{0}(1-\beta)}=\frac{p_{0}(1-\beta)-(1-2\beta)}{p_{0}(1-\beta)}
$$

$$
=\frac{\frac{2N}{N+a_{0}-2}(1-\beta)-(1-2\beta)}{\frac{2N}{N+a_{0}-2}(1-\beta)}=\frac{2N(1-\beta)-(N+a_{0}-2)(1-2\beta)}{2N(1-\beta)}
$$

$$
=\frac{N-a_{0}+2(1-2\beta+a_{0}\beta)}{2N(1-\beta)}.
$$

\noindent We obtain that
$$
\frac{1}{\Lambda}\int_{B_{R}} K(|x|)|u|^{q-1}|h|dx
$$

$$
\leq\left(\left(\int_{B_{R}} \left(|x|^{\frac{\alpha}{1-\beta}}|u|^{\frac{q-1}{1-\beta}}\right)^{\left(\frac{1-\beta}{1-2\beta}p_{0}\right)^{'}}dx\right)^{\frac{1}{\left(\frac{1-\beta}{1-2\beta}p_{0}\right)^{'}}}\left(\int_{B_{R}} |h|^{p_{0}}dx\right)^{\frac{1-2\beta}{(1-\beta) p_{0} }}\right)^{1-\beta}\|h\|^{2\beta}
$$
 
$$
\leq ||u||^{q-1}  \left(\left(\int_{B_{R}}\left(|x|^{\frac{\alpha}{1-\beta}-\nu\frac{q-1}{1-\beta}}\right)^{\left(\frac{1-\beta}{1-2\beta} p_{0} \right)^{'}}dx\right)^{\frac{1}{\left(\frac{1-\beta}{1-2\beta}p_{0} \right)^{'}}}  C \,  \|h\|^{\frac{1-2\beta}{1-\beta}}\right)^{1-\beta}\|h\|^{2\beta}
$$

$$
=||u||^{q-1}  C \, \left(\int_{B_{R}}|x|^{\frac{\alpha-\nu(q-1)}{N-a_{0}+2(1-2\beta+a_{0}\beta)}2N}dx\right)^{\frac{N-a_{0}+2(1-2\beta+a_{0}\beta)}{2N}}\|h\|,
$$

\noindent where we used (\ref{eq:333-1}) and Lemma 2.3.

\item If $\beta=\frac{1}{2}$, there follows
$$
\frac{1}{\Lambda}\int_{B_{R}} K(|x|)|u|^{q-1}|h|dx\leq\int_{B_R}|x|^{\alpha}|u|^{q-1}V(|x|)^{\frac{1}{2}}|h|dx
$$

$$
\leq\left(\int_{B_{R}}|x|^{2\alpha}|u|^{2(q-1)}dx\right)^{\frac{1}{2}}\left(\int_{B_{R}} V(|x|)|h|^{2}dx\right)^{\frac{1}{2}}
$$

$$
\le ||u||^{q-1}  C \,  \left(\int_{B_{R}}|x|^{2\alpha-2\nu(q-1)}dx\right)\|h\|.
$$

\item If $\frac{1}{2}<\beta<1$, then we can apply H\"older's inequality first with conjugate exponents $2$, then $\frac{1}{2\beta-1}$, $\frac{1}{2(1-\beta)}$.
We get
$$
\frac{1}{\Lambda}\int_{B_{R}}K(|x|)|u|^{q-1}|h|dx
$$

$$
\leq\int_{B_{R}}|x|^{\alpha}V(|x|)^{\beta}|u|^{q-1}|h|dx\leq\int_{B_R}|x|^{\alpha}V(|x|)^{\frac{2\beta-1}{2}}|u|^{q-1}V(|x|)^{\frac{1}{2}} |h| dx
$$

$$
\le\left(\int_{B_{R}}|x|^{2\alpha}V(|x|)^{2\beta-1}|u|^{2(q-1)}dx\right)^{\frac{1}{2}}\left(\int_{B_{R}}V(|x|)|h|^{2}dx\right)^{\frac{1}{2}}
$$

$$
\leq\left(\int_{B_{R}}|x|^{2\alpha}|u|^{2(q-2\beta)}V(|x|)^{2\beta-1}|u|^{2(2\beta-1)}dx\right)^{\frac{1}{2}}\|h\|
$$

$$
\leq\left(\left(\int_{B_{R}}|x|^{\frac{\alpha}{1-\beta}}|u|^{\frac{q-2\beta}{1-\beta}}dx\right)^{2(1-\beta)}\left(\int_{B_{R}} V(|x|)|u|^{2}dx\right)^{2\beta-1}\right)^{\frac{1}{2}}\|h\|
$$

$$
\leq C \, ||u||^{q-2\beta} \, \left(\left(\int_{B_{R}}|x|^{\frac{\alpha}{1-\beta}-\nu\frac{q-2\beta}{1-\beta}}dx\right)^{2(1-\beta)}\left(\int_{B_{R}}V(|x|)|u|^{2}dx\right)^{2\beta-1}\right)^{\frac{1}{2}}\|h\|
$$

$$
=C \, ||u||^{q-2\beta} \, \left(\int_{B_{R}} |x|^{\frac{\alpha-\nu(q-2\beta)}{1-\beta}}dx\right)^{1-\beta}\left(\int_{B_{R}} V(|x|)|u|^{2}dx\right)^{\frac{2\beta-1}{2}}\|h\|
$$

$$
\leq C \, \left(\int_{B_{R}}|x|^{\frac{\alpha-\nu(q-2\beta)}{1-\beta}}dx\right)^{1-\beta}\|u\|_{A}^{q-1}\|h\| .
$$

\item  If $\beta=1$, then the hypothesis $q>\mathrm{max}\{1,2\beta\}$ implies $q>2$. Thus, we have
$$
\frac{1}{\Lambda}\int_{B_{R}} K(|x|)|u|^{q-1}|h|dx\leq\int_{B_{R}}|x|^{\alpha}V(|x|)|u|^{q-1}|h|dx
$$

$$
\leq\int_{B_{R}}|x|^{\alpha}V(|x|)^{\frac{1}{2}}|u|^{q-1}V(|x|)^{\frac{1}{2}}|h|dx
$$

$$
\leq\left(\int_{B_{R}}|x|^{2\alpha}V(|x|)|u|^{2(q-1)}dx\right)^{\frac{1}{2}}\left(\int_{B_{R}}V(|x|)|h|^{2}dx\right)^{\frac{1}{2}}
$$

$$
\leq\left(\int_{B_{R}}|x|^{2\alpha}|u|^{2(q-2)}V(|x|)|u|^{2}dx\right)^{\frac{1}{2}}\|h\|
$$

$$
\leq C  ||u||^{q-2}\left(\int_{B_{R}}|x|^{2\alpha-2\nu(q-2)}V(|x|)|u|^{2}dx\right)^{\frac{1}{2}}\|h\| .
$$

 \end{enumerate} The proof is now concluded.
\end{proof}

\bigskip

\begin{lem}
\label{A8} Let $N\geq3$ and $R>0$. Assume $[A]$, $[V],$ $[K]$. Assume also that

$$
\Lambda :=\mathsf{\mathnormal{\mathbf{\mathrm{ess\!\sup_{\mathit{x\in B_{R}^{c}}}}}}}\frac{K(|x|)}{|x|^{\alpha}V(|x|)^{\beta}}<+\infty\quad for\;some\;0\leq\beta\leq1\;and\;\alpha\in\mathbb{R}.
$$

\noindent Then $\forall u , h \in X$ and $\forall q>\mathrm{max}\{1,2\beta\}$ we have

$$
\int_{B_{R}^{c}}K(|x|)|u|^{q-1}|h|dx
$$

$$
\leq\begin{cases}
\Lambda ||u||^{q-1} \, C \,  s_{0}^{1-2\beta}\left(\int_{B_{R}^{c}}|x|^{\frac{\alpha-\nu(q-1)}{N-a_{\infty}+2(1-2\beta+a_{\infty}\beta)}2N}dx\right)^{\frac{N-a_{\infty}+2(1-2\beta+a_{\infty}\beta)}{2N}}\|h\| & 0\leq\beta\leq\frac{1}{2}\\
\Lambda  ||u||^{q-1}  \, C \,  \left(\int_{B_{R}^{c}}|x|^{\frac{\alpha-\nu(q-2\beta)}{1-\beta}}dx\right)^{1-\beta} \|h\| & \frac{1}{2}<\beta<1\\
\Lambda  ||u||^{q-2}  \, C \,  \left(\int_{B_{R}^{c}}|x|^{2\alpha-2\nu(q-2)} V(|x|)|u|^{2}dx\right)^{\frac{1}{2}}\|h\| & \beta=1
\end{cases}
$$
where $\nu:=\frac{N+a_{\infty}-2}{2}$ and $C=C (N,R, a_0, a_{\infty})$.\end{lem}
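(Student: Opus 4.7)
The plan is to proceed exactly as in the proof of Lemma \ref{A7}, case by case on the value of $\beta$, but with the exterior region $B_R^c$ taking the place of $B_R$. The two key technical ingredients that change are:
\begin{itemize}
\item the pointwise radial bound from Lemma \ref{A2} (valid on $B_R$ with exponent $\tfrac{N+a_0-2}{2}$) must be replaced by the exterior bound from Lemma \ref{A1} (valid on $B_R^c$ with exponent $\tfrac{N+a_\infty-2}{2}$), which is why $\nu$ in the statement now reads $\tfrac{N+a_\infty-2}{2}$;
\item the integrability $\|\rho u\|_{L^{p_0}(B_{R+1})}\le C\|u\|_A$ coming from inequality (\ref{eq:333-1}) must be replaced by its exterior analog, namely inequality (\ref{eq:2222}), which gives $\|u\|_{L^{p_\infty}(B_R^c)}\le C\|u\|_A$, and hence $\|u\|_{L^{p_\infty}(B_R^c)}\le C\|u\|_X$ since $\|u\|_A\le\|u\|_X$.
\end{itemize}
Everywhere in the computations of Lemma \ref{A7} where one invokes $p_0=\tfrac{2N}{N+a_0-2}$ and its conjugate exponent $\tfrac{2N}{N-a_0+2}$, we instead invoke $p_\infty=\tfrac{2N}{N+a_\infty-2}$ and its conjugate $\tfrac{2N}{N-a_\infty+2}$.

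Concretely, I would split the argument into the same five subcases ($\beta=0$; $0<\beta<\tfrac12$; $\beta=\tfrac12$; $\tfrac12<\beta<1$; $\beta=1$). In each case the hypothesis gives $K(|x|)\le\Lambda|x|^\alpha V(|x|)^\beta$ a.e.\ on $B_R^c$, so one estimates
\[
\int_{B_R^c}K(|x|)|u|^{q-1}|h|\,dx\;\le\;\Lambda\int_{B_R^c}|x|^\alpha V(|x|)^\beta |u|^{q-1}|h|\,dx
\]
and then applies a (double, in the generic case) H\"older split chosen so that one factor is $\int V|h|^2\,dx\le\|h\|^2$ (when $\beta>0$), another is a factor to which (\ref{eq:2222}) can be applied to produce $\|h\|^{\text{(rest)}}$, and the remaining factor is rearranged, via the pointwise bound from Lemma \ref{A1}, into a pure power of $\|u\|$ times a radial integral in $|x|$. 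The algebra for picking the conjugate exponents is identical to Lemma \ref{A7} with $a_0$ replaced by $a_\infty$; in particular the intermediate exponent in the generic case $0<\beta<\tfrac12$ is $\bigl(\tfrac{1-\beta}{1-2\beta}p_\infty\bigr)'=\tfrac{2N(1-\beta)}{N-a_\infty+2(1-2\beta+a_\infty\beta)}$, producing the exponent displayed in the statement. For $\tfrac12<\beta<1$ and $\beta=1$ one does not need the embedding on $u$, only on $h$, so these cases are essentially immediate.

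The main obstacle is purely bookkeeping: tracking the exponents through the nested H\"older inequalities and verifying that the final integrands simplify to the displayed forms. There is no new analytic difficulty beyond what Lemma \ref{A7} already settled; the only genuinely different point is that on $B_R^c$ the pointwise bound of Lemma \ref{A1} has weight $r^{-\frac{N+a_\infty-2}{2}}$ growing weight behavior at infinity relative to the integration measure, which is precisely what makes the radial integrals $\int_{B_R^c}|x|^{(\cdot)}dx$ converge in the relevant regimes. Constants $C=C(N,R,a_0,a_\infty)$ may change from line to line, absorbing the constants of Lemma \ref{A1} and inequality (\ref{eq:2222}).
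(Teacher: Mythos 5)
Your proposal is correct and follows exactly the route the paper intends: the paper itself gives no separate argument for Lemma \ref{A8}, stating only that the proof is the same as that of Lemma \ref{A7}, and your substitutions (Lemma \ref{A1} for Lemma \ref{A2}, inequality (\ref{eq:2222}) for (\ref{eq:333-1}), $p_\infty$ and $a_\infty$ for $p_0$ and $a_0$, hence $\nu=\frac{N+a_\infty-2}{2}$) are precisely the right ones, including the conjugate-exponent computation in the case $0<\beta<\tfrac12$.
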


\noindent The proof of Lemma \ref{A8} is the same as that of Lemma \ref{A7}, and we will skip it.

\bigskip

\begin{defin}
For $\alpha\in\mathbb{R}$, $\beta\in[0,1]$ and $a\in(2-N,2]$ we define the functions $\alpha^{*}(a,\beta)$ and $q^{*}(a,\alpha,\beta)$ as follows:
$$
\alpha^{*}(a,\beta):=\max \left\{2\beta-1-\frac{N}{2}-a\beta+\frac{a}{2},-(1-\beta)N \right\}
$$

$$
=\begin{cases}
2\beta-1-\frac{N}{2}-a\beta+\frac{a}{2} & if \;0\leq\beta\leq\frac{1}{2}\\
-(1-\beta)N & if \;\frac{1}{2}\leq\beta\leq1
\end{cases}
$$

$$
q^{*}(a,\alpha,\beta):=2\, \frac{\alpha-2\beta+N+a\beta}{N+a-2}.
$$

\end{defin}

\bigskip

\medskip{}

\begin{thm}
\label{theor0}

Let $N\geq3$.  Assume $[A]$, $[V],$ $[K]$. Assume also that there exists $R_{1}>0$ such that
\begin{equation}
\mathsf{\mathnormal{\mathbf{\mathrm{ess\!\!\!\!\sup_{\mathit{x \in B_{R_1}}}}}}}\frac{K(|x|)}{|x|^{\alpha_{0}}V(|x|)^{\beta_{0}}}<+\infty\quad for\;some\;0\leq\beta_{0}\leq1\;and\;\alpha_{0}>\alpha^{*}(a_{0},\beta_{0}).\label{eq:228}
\end{equation}
Then $\lim_{R\rightarrow0^{+}}\mathcal{S}_{0}(q_{1},R)=0$ for any $q_{1}\in\mathbb{R}$ such that
\begin{equation}
\mathrm{max}\{1,2\beta_{0}\}<q_{1}<q^{*}(a_{0},\alpha_{0},\beta_{0}).\label{eq:12-1}
\end{equation}
\end{thm}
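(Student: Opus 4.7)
The plan is to fix $R\le R_1$, apply Lemma \ref{A7} with $h=u$, $\alpha=\alpha_0$, $\beta=\beta_0$, $q=q_1$, and show that the upper bound it produces has the form $C(R)\|u\|^{q_1}$ with $C(R)\to 0$ as $R\to 0^+$. Since the essential supremum in (\ref{eq:228}) is monotone in $R$, the lemma applies uniformly for all $R\le R_1$ with a single constant $\Lambda$. Taking the supremum over $\{u\in X:\|u\|=1\}$ will then yield $\mathcal{S}_{0}(q_{1},R)\le C(R)\to 0$, which is the thesis.

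In the regime $0\le\beta_0\le 1/2$, Lemma \ref{A7} with $h=u$ gives a bound of the form $\Lambda\|u\|^{q_1}\,C\left(\int_{B_R}|x|^{\gamma}\,dx\right)^{(N-a_0+2(1-2\beta_0+a_0\beta_0))/(2N)}$ with $\gamma=\frac{2N[\alpha_0-\nu(q_1-1)]}{N-a_0+2(1-2\beta_0+a_0\beta_0)}$ and $\nu=(N+a_0-2)/2$. A direct algebraic check shows $\gamma>-N$ is equivalent to $q_1<q^*(a_0,\alpha_0,\beta_0)$, so the radial integral $\int_{B_R}|x|^\gamma\,dx$ is finite and vanishes as $R\to 0^+$. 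The regime $1/2<\beta_0<1$ is analogous, with $\gamma=(\alpha_0-\nu(q_1-2\beta_0))/(1-\beta_0)$, and again $\gamma>-N\Leftrightarrow q_1<q^*$. The case $\beta_0=1$ requires a small twist: Lemma \ref{A7} produces the factor $\left(\int_{B_R}|x|^{2[\alpha_0-\nu(q_1-2)]}V(|x|)|u|^2\,dx\right)^{1/2}$, and here I would notice that $q_1<q^*(a_0,\alpha_0,1)=2+2\alpha_0/(N+a_0-2)$ is precisely what forces the weight exponent $2[\alpha_0-\nu(q_1-2)]$ to be \emph{strictly positive}. One may therefore dominate the weight by the constant $R^{2[\alpha_0-\nu(q_1-2)]}$ on $B_R$ and control the remaining $\int_{B_R}V|u|^2\,dx$ by $\|u\|^2$, obtaining a bound proportional to $R^{\alpha_0-\nu(q_1-2)}\|u\|^{q_1}\to 0$.

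The main obstacle is not analytical but purely algebraic bookkeeping: in each of the three regimes one must verify that the single-power integrability threshold ($\gamma>-N$, or positivity of the $V$-weighted exponent when $\beta_0=1$) coincides \emph{exactly} with $q_1<q^*(a_0,\alpha_0,\beta_0)$, and that the lower hypothesis $\alpha_0>\alpha^*(a_0,\beta_0)$ is precisely the condition that guarantees $q^*(a_0,\alpha_0,\beta_0)>\max\{1,2\beta_0\}$, so that the admissible window (\ref{eq:12-1}) is nonempty (indeed, plugging $\alpha_0=\alpha^*$ into $q^*$ returns $1$ for $\beta_0\le 1/2$ and $2\beta_0$ for $\beta_0\ge 1/2$, and $q^*$ is strictly increasing in $\alpha_0$ since $N+a_0-2>0$). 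No new analytic input beyond Lemma \ref{A7} is needed, and uniformity in $u$ is automatic because all constants produced there depend only on $N,R,a_0,a_\infty,q_1,s$.
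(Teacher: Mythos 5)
Your proposal is correct and follows essentially the same route as the paper: apply Lemma \ref{A7} with $h=u$, $\alpha=\alpha_0$, $\beta=\beta_0$ on $B_R$ for $R\le R_1$ (using monotonicity of the essential supremum), and verify in each of the three regimes that the condition $q_1<q^*(a_0,\alpha_0,\beta_0)$ is exactly what makes the resulting power of $R$ positive, so that $\mathcal{S}_0(q_1,R)\le CR^{\delta}\to 0$; your treatment of $\beta_0=1$ via $|x|^{2[\alpha_0-\nu(q_1-2)]}\le R^{2[\alpha_0-\nu(q_1-2)]}$ and $\int V|u|^2\le\|u\|^2$ is precisely the paper's argument. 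The extra check that $\alpha_0>\alpha^*(a_0,\beta_0)$ makes the interval \eqref{eq:12-1} nonempty is a correct (and welcome) addition not spelled out in the paper's proof.
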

\begin{proof}
Let $u, h\in X$ satisfy $\|u\|=\|h\|=1$. Take $R$ such that $0<R <R_1$. Of course we have
$$
\mathsf{\mathnormal{\mathbf{\mathrm{ess\!\sup_{\mathit{x\in B_{R}} }}}}}\frac{K(|x|)}{|x|^{\alpha_{0}}V(|x|)^{\beta_{0}}}\leq\mathsf{\mathnormal{\mathbf{\mathrm{ess\!\!\!\!\sup_{\mathit{x\in B_{{R_1}}} }}}}}\frac{K(|x|)}{|x|^{\alpha_{0}}V(|x|)^{\beta_{0}}}<+\infty,
$$

\noindent so we can apply Lemma \ref{A7} with $\alpha=\alpha_{0}$ and $\beta=\beta_{0}$ and $h=u$. 

\begin{enumerate}

\item 

If $0\leq\beta_{0}\leq\frac{1}{2}$ we obtain 

$$
\int_{B_{R}}K(|x|)|u|^{q_{1}}dx= \int_{B_{R}}K(|x|)|u|^{q_{1}-1} \, |u| \,dx
$$

$$
\leq C\left(R^{\frac{2\alpha_{0}-4\beta_{0}+2N+2a_{0}\beta_{0}-(N+a_{0}-2)q_{1}}{N-a_{0}+2(1-2\beta_{0}+a_{0}\beta_{0})}N}\right)^{\frac{N-a_{0}+2(1-2\beta_{0}+a_{0}\beta_{0})}{2N}},
$$

\noindent since

$$
\frac{\alpha_{0}-\nu(q-1)}{N-a_{0}+2(1-2\beta_{0}+a_{0}\beta_{0})}2N +N ={\frac{2\alpha_{0}-4\beta_{0}+2N+2a_{0}\beta_{0}-(N+a_{0}-2)q_{1}}{N-a_{0}+2(1-2\beta_{0}+a_{0}\beta_{0})}N}
$$

\noindent and it is easy to check that

$$
2\alpha_{0}-4\beta_{0}+2N+a_{0}\beta_{0}-(N+a_{0}-2)q_{1}=(N+a_{0}-2)(q^{*}(a_{0},\alpha_{0},\beta_{0})-q_{1})>0,
$$

$$
N-a_{0}+2(1-2\beta_{0}+a_{0}\beta_{0})\geq N>0.
$$

\item If $\frac{1}{2}<\beta_{0}<1$ we get 
$$
\int_{B_{R}}K(|x|)|u|^{q_{1}}dx  =  \int_{B_{R}}K(|x|)|u|^{q_{1}-1} \, |u| \, dx  \leq C\left(R^{\frac{2\alpha_{0}-(N+a_{0}-2)(q_{1}-2\beta_{0})}{2(1-\beta_{0})}+N}\right)^{1-\beta_{0}},
$$

\noindent because

$$
\frac{\alpha_{0}-\nu(q_1 -2\beta_{0})}{1-\beta_{0}}+N=\frac{\alpha_{0}- \frac{N+a_{0}-2}{2}  (q_1 -2\beta_{0})}{1-\beta_{0}}
+N
$$

$$
=\frac{2\alpha_{0}-(N+a_{0}-2)(q_{1}-2\beta_{0})}{2(1-\beta_{0})}+N=\frac{N+a_{0}-2}{2(1-\beta_{0})}(q^{*}(a_{0},\alpha_{0},\beta_{0})-q_{1})>0.
$$

\item Finally, if $\beta_{0}=1$, it holds that

$$
\int_{B_{R}}K(|x|)|u|^{q_{1}}dx  = \int_{B_{R}}K(|x|)|u|^{q_{1}-1} \, |u| \, dx   \leq CR^{\frac{2\alpha_{0}-(N+a_{0}-2)(q_{1}-2)}{2}},
$$

\noindent because

$$
2\alpha_{0}-2\nu(q_1-2)=  2\alpha_{0}-2   \frac{N+a_{0}-2}{2} (q_1-2)
$$

$$
=2\alpha_{0}-(N+a_{0}-2)(q_{1}-2)=(N+a_{0}-2)(q^{*}(a_{0},\alpha_{0},1)-q_{1})>0.
$$

\end{enumerate}

\noindent Hence, in any of the previous cases there exist a constant $\delta=\delta(N,a_{0},\alpha_{0},\beta_{0},q_{1})>0$ such that 
$$
\mathcal{R}_{0}(q_{1},R)\leq CR^{\delta} \rightarrow 0 \quad \mbox{as} \, R \rightarrow 0,
$$

\noindent from which our thesis follows.
\end{proof}

\begin{thm}

Let $N\geq3$. Assume $[A]$, $[V],$ $[K]$. Assume also that there exists $R_{2}>0$ such that

$$
\mathsf{\mathnormal{\mathbf{\mathrm{ess\!\sup_{\mathit{|x|>R_{2}}}}}}}\frac{K(|x|)}{|x|^{\alpha_{\infty}}V(|x|)^{\beta_{\infty}}}<+\infty\quad for\;some\;0\leq\beta_{\infty}\leq1\;and\;\alpha_{\infty}\in\mathbb{R}.
$$

\noindent Then $\lim_{R\rightarrow+\infty}\mathcal{S}_{\infty}(q_{2},R)=0$ for each $q_{2}\in\mathbb{R}$ such that
\begin{equation}
q_{2}>\mathrm{max}\{1,2\beta_{\infty},q^{*}(a_{\infty},\alpha_{\infty},\beta_{\infty})\}.\label{eq:24-1}
\end{equation}
\end{thm}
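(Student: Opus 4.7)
The plan is to mirror the argument used in Theorem \ref{theor0}, replacing Lemma \ref{A7} with Lemma \ref{A8} and inspecting the sign of the resulting exponent of $R$ at infinity instead of at zero. Take $R > R_2$ and $u,h \in X$ with $\|u\|=\|h\|=1$; by monotonicity of $\mathrm{ess\,sup}$, the hypothesis on $B_{R_2}^c$ transfers to $B_R^c$, so Lemma \ref{A8} applies with $\alpha=\alpha_\infty$, $\beta=\beta_\infty$, $\nu=\frac{N+a_\infty-2}{2}$, and $h=u$. This reduces the proof of $\mathcal{S}_\infty(q_2,R)\to 0$ to estimating
\[
\int_{B_R^c}|x|^{\gamma(\beta_\infty)}\,dx
\]
for an exponent $\gamma(\beta_\infty)$ dictated by Lemma \ref{A8}; such an integral equals $C\,R^{\gamma(\beta_\infty)+N}$ provided $\gamma(\beta_\infty)+N<0$, and that is exactly what we must verify.

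First I would split into the same three cases as in the proof of Theorem \ref{theor0}:
\begin{itemize}
\item For $0\le \beta_\infty\le \tfrac12$, the relevant exponent is $\tfrac{2N(\alpha_\infty-\nu(q_2-1))}{N-a_\infty+2(1-2\beta_\infty+a_\infty\beta_\infty)}$; adding $N$ and rearranging, $\gamma+N<0$ is equivalent to $2(\alpha_\infty+N-2\beta_\infty+a_\infty\beta_\infty)<(N+a_\infty-2)q_2$, i.e. $q_2>q^*(a_\infty,\alpha_\infty,\beta_\infty)$.
\item For $\tfrac12<\beta_\infty<1$, the relevant exponent is $\tfrac{\alpha_\infty-\nu(q_2-2\beta_\infty)}{1-\beta_\infty}$; adding $N$ and rearranging gives $\tfrac{N+a_\infty-2}{2(1-\beta_\infty)}(q^*(a_\infty,\alpha_\infty,\beta_\infty)-q_2)<0$, again equivalent to $q_2>q^*$.
\item For $\beta_\infty=1$, the third line of Lemma \ref{A8} with $h=u$ yields the integrand $|x|^{2\alpha_\infty-2\nu(q_2-2)}V(|x|)|u|^2$, which after using $\int V|u|^2\le \|u\|^2=1$ is dominated by $C\sup_{|x|\ge R}|x|^{2\alpha_\infty-2\nu(q_2-2)}\cdot 1$; the supremum is $R^{2\alpha_\infty-2\nu(q_2-2)}$ (the exponent being negative is equivalent to $q_2>q^*(a_\infty,\alpha_\infty,1)$).
\end{itemize}
In every case one arrives at the bound $\mathcal{S}_\infty(q_2,R)\le C\,R^{-\delta}$ for some $\delta=\delta(N,a_\infty,\alpha_\infty,\beta_\infty,q_2)>0$, which yields the claim upon letting $R\to+\infty$.

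The computations are essentially algebraic identities already carried out in the proof of Theorem \ref{theor0}, with the key difference being that over an exterior domain the integral $\int_{B_R^c}|x|^\gamma\,dx$ is finite and vanishes as $R\to\infty$ only when $\gamma<-N$, whereas at the origin one needed $\gamma>-N$. Thus the role of the inequalities $q_1<q^*$ in Theorem \ref{theor0} is taken here by $q_2>q^*$, which is exactly the hypothesis \eqref{eq:24-1}. The condition $q_2>\max\{1,2\beta_\infty\}$ ensures the applicability of Lemma \ref{A8}. The main obstacle is purely bookkeeping: correctly matching each of the three branches of Lemma \ref{A8} to the corresponding exponent computation and confirming that the inequality $q_2>q^*(a_\infty,\alpha_\infty,\beta_\infty)$ is precisely what makes $R^{\gamma+N}\to 0$; no additional analytic idea is required beyond that already developed at the origin.
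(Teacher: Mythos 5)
Your proposal is correct and follows essentially the same route as the paper: the paper likewise applies Lemma \ref{A8} with $\alpha=\alpha_\infty$, $\beta=\beta_\infty$, $h=u$ after transferring the essential supremum bound from $B_{R_2}^c$ to $B_R^c$, splits into the same three cases in $\beta_\infty$, and checks that $q_2>q^*(a_\infty,\alpha_\infty,\beta_\infty)$ makes the exponent of $R$ negative (with the $\beta_\infty=1$ case handled exactly as you do, via $\int V|u|^2\le\|u\|^2$). Your observation that the sign condition flips relative to Theorem \ref{theor0} because $\int_{B_R^c}|x|^\gamma\,dx$ requires $\gamma+N<0$ is precisely the content of the paper's (abbreviated) argument.
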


\begin{proof}
Let $u \in X$ satisfy $\|u\|=1$. Consider $R\geq R_2$, of course we have 
$$
\mathsf{\mathnormal{\mathbf{\mathrm{ess\!\sup_{\mathit{x\in B_{R}^{c}}}}}}}\frac{K(|x|)}{|x|^{\alpha_{\infty}}V(|x|)^{\beta_{\infty}}}\leq\mathnormal{\mathbf{\mathrm{ess\!\sup_{\mathit{|x|>R_{2}}}}}}\frac{K(|x|)}{|x|^{\alpha_{\infty}}V(|x|)^{\beta_{\infty}}}<+\infty,
$$

\noindent hence we can apply Lemma \ref{A8} with $\alpha=\alpha_{\infty}$ and $\beta=\beta_{\infty}$. The arguments are the same as in Theorem \ref{theor0}, so we will skip the details.

\begin{enumerate}
\item

If $0\leq\beta_{\infty}\leq\frac{1}{2}$, with similar considerations of those used for $\beta_{0}$, we find
$$
\int_{B_{R}}K(|x|)|u|^{q_{2}}dx =\int_{B_{R}}K(|x|)|u|^{q_{2}-1} \, |u| \,dx
$$

$$
\leq C\left( 
R^{\frac{2\alpha_{\infty}-4\beta_{\infty}+2N+2a_{\infty}\beta_{\infty}-(N+a_{\infty}-2)q_{2}}{N-a_{\infty}+2(1-2\beta_{\infty}+a_{\infty}\beta_{\infty})}N }
\right)^\frac{N-a_{\infty}+2(1-2\beta_{\infty} + a_{\infty} \beta_{\infty})}{2N} $$

\noindent since

$$
2\alpha_{\infty}-4\beta_{\infty}+2N+a_{\infty}\beta_{\infty}-(N+a_{\infty}-2)q_{2}<0,
$$

$$
N-a_{\infty}+2(1-2\beta_{\infty}+a_{\infty}\beta_{\infty})\geq N>0.
$$

\item On the other hand, if $\frac{1}{2}<\beta_{\infty}<1$, we have 
$$
\int_{B_{R}}K(|x|)|u|^{q_2}dx = \int_{B_{R}}K(|x|)|u|^{q_2 -1} \, |u| \, dx   \leq C\left(R^{\frac{2\alpha_{\infty}-(N+a_{\infty}-2)(q_{2}-2\beta_{\infty})}{2(1-\beta_{\infty})}+N}\right)^{1-\beta_{\infty}},
$$

\noindent as 

$$
\frac{2\alpha_{\infty}-(N+a_{\infty}-2)(q_{2}-2\beta_{\infty})}{2(1-\beta_{\infty})}+N<0.
$$

\item Finally, if $\beta_{\infty}=1$, we obtain
$$
\int_{B_{R}}K(|x|)|u|^{q_{2}}dx= \int_{B_{R}}K(|x|)|u|^{q_2 -1} \, |u| \, dx \leq CR^{\frac{2\alpha_{\infty}-(N+a_{\infty}-2)(q_{2}-2)}{2}},
$$

\noindent because

$$
2\alpha_{\infty}-(N+a_{\infty}-2)(q_{2}-2)<0.
$$

\end{enumerate} 
In each of the previous cases, there exists $\delta=\delta(N,a,\alpha_{\infty},\beta_{\infty},q_{2})>0$ such that
$$
\mathcal{S}_{\infty}(q_{2},R)\leq CR^{-\delta} \rightarrow 0, \quad \mbox{as} \, R\rightarrow \infty ,
$$

\noindent from which our thesis follows. 
\end{proof}

\bigskip

\noindent From the previous theorems we easily derives our main compactness result.

\bigskip

\begin{thm} \label{thmcomp}
 Assume $N\geq3$.  Assume $[A]$, $[V],$ $[K]$. Moreover, assume the hypotheses of the two previous theorems, that is: there are $R_1 , R_2 >0$, $\alpha_0 , \alpha_{\infty} \in \mathbb{R}$, $\beta_0 , \beta_{\infty} \in [0,1]$ such that
$$
\mathsf{\mathnormal{\mathbf{\mathrm{ess\sup_{|x|<R_1}}}}}\frac{K(|x|)}{|x|^{\alpha_{0}}V(|x|)^{\beta_{0}}}<+\infty,\;\mathbf{\mathrm{ess\sup_{|x|\geq R_2}}}\frac{K(|x|)}{|x|^{\alpha_{\infty}}V(|x|)^{\beta_{\infty}}}<+\infty .
$$

\noindent Thus, for $q_{1}$ and $q_{2}$ such that
$$
\begin{cases}
q_{1}\in\mathcal{I}_{1}=(\mathrm{max}\{1,2\beta_{0}\},\,q^{*}(a_{0},\alpha_{0},\beta_{0}))\\
q_{2}\in\mathcal{I}_{2}=(\mathrm{max}\{1,2\beta_{\infty},\,q^{*}(a_{\infty},\alpha_{\infty},\beta_{\infty})\},+\infty)
\end{cases}
$$

\noindent the embedding
$$
X\hookrightarrow L_{K}^{q_{1}}(\mathbb{R}^{N})+L_{K}^{q_{2}}(\mathbb{R}^{N}).
$$

\noindent is continuous and compact.
\end{thm}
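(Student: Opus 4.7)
The plan is to reduce Theorem \ref{thmcomp} to a direct application of the abstract embedding theorem at the end of Section 3, by verifying its compactness hypothesis (\ref{eq:28}) via the two immediately preceding theorems of Section 4. All the technical machinery -- the weighted H\"older estimates of Lemmas \ref{A7} and \ref{A8} and the pointwise decay bounds of Lemmas \ref{A1}--\ref{A2} -- has already been done, so only a packaging step is required.

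First I would invoke Theorem \ref{theor0} with parameters $\alpha=\alpha_0$, $\beta=\beta_0$ (admissible thanks to the first ess-sup assumption) and with $q_1$ chosen in the interval $\mathcal{I}_1=(\max\{1,2\beta_0\},q^*(a_0,\alpha_0,\beta_0))$. Non-emptiness of this interval encodes precisely the inequality $\alpha_0>\alpha^*(a_0,\beta_0)$ appearing in (\ref{eq:228}), so the hypothesis of Theorem \ref{theor0} is automatic from writing $q_1\in\mathcal{I}_1$; the conclusion is
$$
\lim_{R\to 0^+}\mathcal{S}_0(q_1,R)=0.
$$
Second, I would apply the analogous theorem for $\mathcal{S}_\infty$ in exactly the same way, using the second ess-sup assumption and the interval $\mathcal{I}_2$, to obtain
$$
\lim_{R\to+\infty}\mathcal{S}_\infty(q_2,R)=0.
$$
Together these two limits are exactly condition (\ref{eq:28}) of the embedding theorem in Section 3, which yields compactness of $X\hookrightarrow L_K^{q_1}(\mathbb{R}^N)+L_K^{q_2}(\mathbb{R}^N)$. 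Continuity, i.e.\ condition (\ref{eq:27}), comes for free: since the two $\mathcal{S}$-functions tend to zero, they are in particular finite at some sufficiently small $R_1$ and sufficiently large $R_2$.

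There is no genuine obstacle at this stage. The only point worth double-checking is the compatibility between the ranges (\ref{eq:12-1}) and (\ref{eq:24-1}) used in the two source theorems and the intervals $\mathcal{I}_1$, $\mathcal{I}_2$ stated here; by direct inspection they coincide. Consequently the proof collapses to one short paragraph citing the three earlier theorems, with the hard analytical work already absorbed into Section 4's lemmas.
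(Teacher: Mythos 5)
Your proposal is correct and is exactly the argument the paper intends (the paper itself only remarks that the theorem ``easily derives'' from the two preceding theorems together with the Section~3 embedding criterion, i.e.\ verifying condition (\ref{eq:28}) and hence (\ref{eq:27})). Your observation that non-emptiness of $\mathcal{I}_1$ is equivalent to $\alpha_0>\alpha^*(a_0,\beta_0)$ checks out in both branches of the definition of $\alpha^*$, so nothing is missing.
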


\bigskip
\bigskip

\section{Applications: existence results}

We now use these results on compact embeddings to obtain existence and multiplicity results for nonlinear elliptic equations. 
We will deal with equation (\ref{EQ}), and we will assume hypotheses $[A],[V],[K]$. As to the nonlinearity $f$ we will assume the following hypotheses.

\begin{itemize}

\item[{$(f_1 )$}] $f: \mathbb{R} \rightarrow \mathbb{R} $ is a continuous functions, and there are constants $q_1 , q_2 >2$ and 
$M>0$ such that 
$$|f(t)| \leq M \min\{ |t|^{q_1 -1}, |t|^{q_2 -1} \}, \quad \mbox{for all} \,\, t \in  \mathbb{R} $$

\item[{$(f_2 )$}] Define $F(t) = \int_0^{t} f(s) \, ds$, then there is $\theta >2$ such that $0 \leq \theta F(t) \leq f(t)t $ for all $t$. Furthermore there is $t_0 >0$ such that $F(t_0 )>0$.

\end{itemize}

\noindent The simplest example of a function satisfying $(f_1 )$, $(f_2 )$ is given by
$$f(t)= \min \{ t^{q_1 -1},  t^{q_2 -1}  \}$$

\noindent  if $t \geq 0$, and $f(t)=-f(-t)$ if $t \leq 0$ (or also $f(t)=0$ if $t \leq 0$), with $q_1 , q_2 >2$. Notice that if $q_1 \not= q_2$ there is no pure power function, i.e. $f(t)=t^q$, satisfying $(f_1 )$. However we do not assume $q_1 \not= q_2$, so pure power functions are included in our results, when the hypotheses will allow to choose $q_1 =q_2$.

\noindent We define the functional $I : X \rightarrow  \mathbb{R}$

$$I(u)= \frac{1}{2} \int_{\mathbb{R}^N} A(|x|) \, |\nabla u |^2 \, dx + \frac{1}{2} \int_{\mathbb{R}^N} V(|x|) \, u^2 \, dx -
\int_{\mathbb{R}^N} K(|x|) \, F(u) \, dx .$$

\bigskip

\begin{thm}
\label{theorboh1}
Assume the hypotheses of Theorem \ref{thmcomp}. Assume $(f_1 ), (f_2 )$ with $q_i \in \mathcal{I}_{i}$, where the intervals $\mathcal{I}_{i}$ are given in Theorem \ref{thmcomp}. Then $I$ is a $C^1$ functional on $X$, whose differential is given by

$$I'(u)h =\int_{\mathbb{R}^N} A(|x|) \, \nabla u \nabla h \, dx + \int_{\mathbb{R}^N} V(|x|) u\, h \, dx - \int_{\mathbb{R}^N} K(|x|) \, f(u) h \, dx $$

\noindent for all $u, h \in X .$
\end{thm}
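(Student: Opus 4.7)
The functional decomposes as $I = I_0 + I_1$, with $I_0(u) := \frac{1}{2}\|u\|_X^2$ and $I_1(u) := -\int_{\mathbb{R}^N} K(|x|) F(u)\,dx$. Since $X$ is Hilbert, $I_0$ is evidently of class $C^1$ with $I_0'(u)h = (u|h)_X$, which reproduces the first two terms of the claimed differential. All the content lies in showing that $I_1$ is $C^1$ with $I_1'(u)h = -\int K(|x|) f(u) h\,dx$; I would carry this out in the standard three Nemytskii-type steps: well-posedness, Gateaux differentiability, and continuity of the Gateaux differential.

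For well-posedness, integrating $(f_1)$ yields $|F(t)| \le C\min\{|t|^{q_1},|t|^{q_2}\}$, and similarly $|f(t)t| \le C\min\{|t|^{q_1},|t|^{q_2}\}$. By Theorem \ref{thmcomp} every $u \in X$ belongs to $L_K^{q_1}+L_K^{q_2}$, and the ``canonical'' decomposition $u = u\chi_{\{|u|>1\}} + u\chi_{\{|u|\le 1\}}$ of \cite{BPR} places (assuming WLOG $q_1 \le q_2$) these two pieces in $L_K^{q_1}$ and $L_K^{q_2}$ respectively; the min-bound on $F$ then gives $\int K|F(u)|\,dx < \infty$. For the Gateaux derivative at $u$ in direction $h$, the mean value theorem yields
\[
\frac{F(u+th)-F(u)}{t} = f(u+\theta_t h)\,h,
\]
for some $\theta_t$ between $0$ and $t$; restricting to $|t| \le 1$ and using $(f_1)$ produces a $K$-integrable dominant of the form $C\min\{(|u|+|h|)^{q_1-1},(|u|+|h|)^{q_2-1}\}|h|$ (handled again via the same min-splitting), so that dominated convergence delivers the claimed formula for $I_1'(u)h$.

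For the continuity of $u \mapsto I_1'(u) \in X^*$, suppose $u_n \to u$ in $X$, hence in $L_K^{q_1}+L_K^{q_2}$ by Theorem \ref{thmcomp}. Using the duality identification $(L_K^{q_1}+L_K^{q_2})^* \cong L_K^{q_1'} \cap L_K^{q_2'}$ from \cite{BPR}, the task reduces to showing that $K f(u_n) \to K f(u)$ in this intersection; after extracting a subsequence with a.e.\ convergence, this follows from the Vitali convergence theorem applied separately on $\{|u|\le 1\}$ and $\{|u|>1\}$, with uniform integrability supplied by the min-bound of $(f_1)$. The main obstacle is precisely this last step: because $u$ naturally lives in a \emph{sum} of weighted Lebesgue spaces rather than a single $L_K^q$, Nemytskii continuity must be argued through a splitting that matches the min-structure of $(f_1)$ --- small values of $u$ treated in the $q_2$-regime, large values in the $q_1$-regime --- which is exactly where hypothesis $(f_1)$ is used in its full strength, rather than as a single-power growth bound.
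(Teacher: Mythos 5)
Your argument is correct in outline, but it takes a different (more self-contained) route than the paper. The paper's proof is essentially a two-line citation: the functional $\Phi(u)=\int_{\mathbb{R}^N}K(|x|)F(u)\,dx$ is of class $C^1$ on the sum space $L_K^{q_1}+L_K^{q_2}$ by Proposition 3.8 of \cite{BPR} (whose hypotheses are exactly the min-growth bound $(f_1)$), and composing with the continuous embedding $X\hookrightarrow L_K^{q_1}+L_K^{q_2}$ from Theorem \ref{thmcomp} finishes the job, the quadratic part being trivially smooth on the Hilbert space $X$. What you have done is to re-derive that cited proposition by hand: your well-posedness step, the mean-value/dominated-convergence computation of the Gateaux derivative, the duality $(L_K^{q_1}+L_K^{q_2})^*\cong L_K^{q_1'}\cap L_K^{q_2'}$, and the splitting along $\{|u|\le 1\}$ versus $\{|u|>1\}$ are precisely the ingredients of the proof in \cite{BPR}. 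The paper's version buys brevity and rigor by reference; yours buys self-containedness and makes visible exactly where the min-structure of $(f_1)$ is used. Two points in your sketch deserve more care if it were to be written out in full: (i) Vitali's theorem on the infinite-measure space $\mathbb{R}^N$ requires tightness (no mass escaping to infinity) in addition to uniform integrability, and the level sets $\{|u_n|>1\}$ move with $n$, so one should either dominate $|f(u_n)|^{q_i'}$ by a fixed $K$-integrable function obtained from a rapidly convergent subsequence in the sum space, or verify tightness explicitly from the min-bound; (ii) the verification that $f(u)\in L_K^{q_1'}\cap L_K^{q_2'}$ uses the elementary inequality $(q_1-1)q_2'\le q_1\le (q_2-1)q_1'$ for $q_1\le q_2$, which is worth recording since it is what makes the cross-exponent integrals finite. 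Neither is a gap in the idea; both are handled in \cite{BPR}, which is why the paper simply cites it.
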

\begin{proof}
We know that the embedding of $X$ in $L_{K}^{q_1}+L_{K}^{q_2}$ is continuous. By the previous results and proposition 3.8 of \cite{BPR} we 
also know that the functional

$$\Phi(u) =  \int_{\mathbb{R}^N} K(|x|) \, F(u) dx$$

\noindent is of class $C^1$ on $L_{K}^{q_1}+L_{K}^{q_2}$, with differential given by

$$\Phi' (u)h =  \int_{\mathbb{R}^N} K(|x|) \, f(u)h dx .$$

\noindent Obviously the quadratic part of $I$ is $C^1$, with differential given by

$$h \rightarrow  \int_{\mathbb{R}^N}A(|x|) \nabla u \nabla h dx + \int_{\mathbb{R}^N}V(|x|) f(u) h   dx .$$

\noindent The thesis easily follows.

\end{proof}

\bigskip

\begin{thm}
\label{theorPS}
Assume the hypotheses of Theorem \ref{thmcomp}. Assume $(f_1 ), (f_2 )$ with $q_i \in \mathcal{I}_{i}$, where the intervals $\mathcal{I}_{i}$ are given in Theorem \ref{thmcomp}. Then $I : X \rightarrow \mathbb{R}$ satisfies the Palais-Smale condition.

\end{thm}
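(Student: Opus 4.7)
The plan is to follow the classical four-step argument for the Palais--Smale condition in Hilbert spaces with subcritical nonlinearity, adapted to the sum-space setting. Let $\{u_n\} \subset X$ be a Palais--Smale sequence, so $I(u_n)$ is bounded and $I'(u_n) \to 0$ in $X^*$.

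\emph{Step 1 (boundedness of $\{u_n\}$ in $X$).} This is the standard Ambrosetti--Rabinowitz argument. Choose any $\theta>2$ as in $(f_2)$ and compute
\[
I(u_n) - \tfrac{1}{\theta} I'(u_n)\, u_n = \left(\tfrac{1}{2}-\tfrac{1}{\theta}\right)\|u_n\|^{2} + \int_{\mathbb{R}^{N}} K(|x|)\!\left(\tfrac{1}{\theta} f(u_n)u_n - F(u_n)\right)dx.
\]
By $(f_2)$ the integrand is nonnegative, while the left side is bounded above by $C(1+\|u_n\|)$ because $|I'(u_n)u_n|\leq \|I'(u_n)\|_{X^*}\|u_n\|$ and $\|I'(u_n)\|_{X^*}\to 0$. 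Since $\theta>2$, a standard estimate gives $\sup_n \|u_n\|<\infty$.

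\emph{Step 2 (passage to a weak limit).} Since $X$ is a Hilbert space, we can extract a subsequence (still denoted $\{u_n\}$) with $u_n \rightharpoonup u$ in $X$. By the compact embedding $X \hookrightarrow L_K^{q_1}+L_K^{q_2}$ from Theorem \ref{thmcomp}, we obtain $u_n \to u$ strongly in $L_K^{q_1}+L_K^{q_2}$.

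\emph{Step 3 (convergence of the nonlinear term).} Let $\Phi(u)=\int_{\mathbb{R}^N} K(|x|)F(u)\,dx$. By Theorem \ref{theorboh1} (together with Proposition~3.8 of \cite{BPR}), $\Phi$ is $C^1$ on $L_K^{q_1}+L_K^{q_2}$, so $\Phi'(u_n)\to \Phi'(u)$ in the dual of $L_K^{q_1}+L_K^{q_2}$. In particular $\{\Phi'(u_n)\}$ is bounded in that dual, and combined with $u_n-u\to 0$ in $L_K^{q_1}+L_K^{q_2}$ this gives
\[
\int_{\mathbb{R}^{N}} K(|x|)\, f(u_n)(u_n-u)\,dx = \Phi'(u_n)(u_n-u) \to 0.
\]

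\emph{Step 4 (strong convergence in $X$).} Writing out $I'(u_n)(u_n-u)$ with the Hilbert inner product $(\,\cdot\,|\,\cdot\,)$ on $X$,
\[
I'(u_n)(u_n-u) = (u_n \,|\, u_n-u) - \int_{\mathbb{R}^{N}} K(|x|)\, f(u_n)(u_n-u)\,dx.
\]
The left side tends to $0$ since $\|I'(u_n)\|_{X^*}\to 0$ and $\{u_n-u\}$ is bounded in $X$. The integral tends to $0$ by Step 3. Hence $(u_n\,|\,u_n-u)\to 0$, i.e. $\|u_n\|^2 \to (u_n\,|\,u)$. By weak convergence $(u_n\,|\,u)\to \|u\|^2$, so $\|u_n\|\to \|u\|$. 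Combined with $u_n\rightharpoonup u$ in the Hilbert space $X$, this yields $u_n\to u$ strongly in $X$, which is the Palais--Smale condition.

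The main obstacle is already taken care of by Theorem \ref{thmcomp}: once we have compactness of $X \hookrightarrow L_K^{q_1}+L_K^{q_2}$ together with the $C^1$ character of $\Phi$ on this sum space (Theorem \ref{theorboh1}), the rest is the standard Hilbert-space argument. The only point requiring some care is verifying that $\Phi'(u_n)(u_n-u)\to 0$ uses the sum-space dual pairing rather than a single $L^q$ pairing, but this follows from the general duality for $L_K^{q_1}+L_K^{q_2}$ developed in \cite{BPR}.
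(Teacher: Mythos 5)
Your proposal is correct and follows essentially the same route as the paper: Ambrosetti--Rabinowitz boundedness from $(f_2)$, weak convergence in $X$ plus strong convergence in $L_K^{q_1}+L_K^{q_2}$ via Theorem \ref{thmcomp}, and then the standard Hilbert-space identity combining $I'(u_n)(u_n-u)\to 0$ with $\Phi'(u_n)(u_n-u)\to 0$ (using boundedness of $\Phi'(u_n)$ in the dual of the sum space) to conclude $\|u_n-u\|\to 0$. No gaps.
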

\begin{proof}

\noindent Assume that $\{ u_n \}_n$ is a sequence in $X$ such that $I(u_n )$ is bounded and $I'(u_n ) \rightarrow 0$ in $X'$. We have to prove that $\{ u_n \}_n$ has a converging subsequence. For this, notice that from the hypotheses we derive, for a suitable positive constant $C$,

$$C + C ||u_n || \geq I(u_n ) + \frac{1}{\theta} I'(u_n ) u_n = \left( \frac{1}{2} - \frac{1}{\theta} \right) \, ||u_n ||^2 + \int_{\mathbb{R}^N} \left( \frac{1}{\theta} f(u_n ) u_n - F(u_n ) \right) \, dx $$
$$\geq \left( \frac{1}{2} - \frac{1}{\theta} \right) \, ||u_n ||^2 , $$

\noindent and this implies that $\{ u_n \}_n$ is bounded. So we can assume, up to a subsequence, $u_n \rightharpoonup u$ in $X$ and $u_n \rightarrow u$ in $L_{K}^{q_{1}}(\mathbb{R}^{N})+L_{K}^{q_{2}}(\mathbb{R}^{N})$. Now we have

$$||u_n - u||^2 = (u_n | u_n -u ) - (u | u_n -u) = I'(u_n ) (u_n -u) + \Phi '(u_n ) (u_n - u) - (u | u_n -u). $$

\noindent Of course $(u | u_n -u)\rightarrow 0$ because  $u_n \rightharpoonup u$ in $X$. We also have that $I' (u_n ) \rightarrow 0$ in $X'$ while $u_n -u$ is bounded in $X$, so $I'(u_n ) (u_n -u) \rightarrow 0$. Lastly, we know that $\Phi$ is $C^1$ in the space $L_{K}^{q_{1}}(\mathbb{R}^{N})+L_{K}^{q_{2}}(\mathbb{R}^{N})$, and $u_n \rightarrow u$ in that space, so $\Phi ' (u_n )$ is bounded (as a sequence in the dual space) and $u_n -u \rightarrow 0$, so $\Phi '(u_n ) (u_n - u)\rightarrow 0$. Hence we get $||u_n - u||^2 \rightarrow 0$, which is the thesis.

\end{proof}

\begin{thm}
\label{theorboh3}
Assume the hypotheses of Theorem \ref{thmcomp}. Assume $(f_1 ), (f_2 )$ with $q_i \in \mathcal{I}_{i}$, where the intervals $\mathcal{I}_{i}$ are given in Theorem \ref{thmcomp}. Then $I : X \rightarrow \mathbb{R}$ has a non negative and non trivial critical point.

\end{thm}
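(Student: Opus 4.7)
My plan is to apply the Mountain Pass Theorem of Ambrosetti--Rabinowitz to a truncation of $I$ and then recover the sign of the resulting critical point.

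\textbf{Truncation.} First I would replace $f$ by the one-sided nonlinearity $\tilde f(t)=f(t)$ for $t\geq 0$ and $\tilde f(t)=0$ for $t<0$, with primitive $\tilde F(t)=\int_{0}^{t}\tilde f(s)\,ds$. The truncation $\tilde f$ is still continuous and inherits $(f_1)$ with the same constants, while $(f_2)$ survives because its witness $t_{0}$ is already positive. Consequently the associated functional
$$\tilde I(u)=\tfrac{1}{2}\|u\|^{2}-\int_{\mathbb R^{N}}K(|x|)\tilde F(u)\,dx$$
is $C^{1}$ on $X$ by Theorem \ref{theorboh1} and satisfies the Palais--Smale condition by Theorem \ref{theorPS}, with $\tilde I(0)=0$.

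\textbf{Mountain Pass geometry.} From $(f_1)$ one gets $0\leq\tilde F(t)\leq C\min\{|t|^{q_1},|t|^{q_2}\}$. Combining the continuous embedding $X\hookrightarrow L_{K}^{q_1}+L_{K}^{q_2}$ of Theorem \ref{thmcomp} with the growth estimates for Nemytskii operators in sum spaces proved in \cite{BPR}, one deduces
$$\int_{\mathbb R^{N}}K(|x|)\tilde F(u)\,dx\leq C\bigl(\|u\|^{q_1}+\|u\|^{q_2}\bigr),$$
and since $q_1,q_2>2$, choosing $\rho>0$ small enough yields $\inf_{\|u\|=\rho}\tilde I(u)\geq\alpha>0$. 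Next, the Ambrosetti--Rabinowitz hypothesis $(f_2)$ yields, by the standard ODE argument, the lower bound $F(t)\geq c_{1}t^{\theta}-c_{2}$ for $t\geq 0$. Choosing any $\phi\in C_{c,r}^{\infty}(\mathbb R^{N}\setminus\{0\})$ with $\phi\geq 0$ and $\phi\not\equiv 0$, one has $\phi\in X$ and $\int_{\mathbb R^{N}}K(|x|)\phi^{\theta}\,dx\in(0,+\infty)$, so $\tilde I(\tau\phi)\to-\infty$ as $\tau\to+\infty$, providing some $e=\tau\phi$ with $\|e\|>\rho$ and $\tilde I(e)<0$.

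\textbf{Critical point and positivity.} The Mountain Pass Theorem now furnishes $u\in X$ with $\tilde I'(u)=0$ and $\tilde I(u)\geq\alpha>0$, hence $u\not\equiv 0$. To show $u\geq 0$, I would test with $u^{-}=\max(-u,0)$, which lies in $X$ by standard truncation arguments (with $\nabla u^{-}=-\nabla u\,\chi_{\{u<0\}}$ pointwise on $\mathbb R^{N}\setminus\{0\}$). Using $\tilde f(u)u^{-}\equiv 0$, $\nabla u\cdot\nabla u^{-}=-|\nabla u^{-}|^{2}$ and $u\,u^{-}=-(u^{-})^{2}$, we get
$$0=\tilde I'(u)u^{-}=-\|u^{-}\|^{2},$$
forcing $u^{-}\equiv 0$ and $u\geq 0$. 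Since on $\{u\geq 0\}$ one has $\tilde f(u)=f(u)$, it follows that $I'(u)=\tilde I'(u)=0$, so $u$ is the desired non-negative, non-trivial critical point of $I$.

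The main technical obstacle is the small-norm upper bound on the Nemytskii term: because $\tilde F$ has mixed-power behavior and $X$ embeds into a \emph{sum} of weighted Lebesgue spaces, the bound $\int K\tilde F(u)\,dx\leq C(\|u\|^{q_1}+\|u\|^{q_2})$ cannot be obtained by a single H\"older estimate and must be extracted from the sum-space analysis of \cite{BPR}. Everything else is a textbook application of Mountain Pass combined with a truncation argument to enforce the sign.
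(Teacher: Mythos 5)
Your proposal is correct and follows essentially the same route as the paper: truncate $f$ to enforce the sign, verify the mountain-pass geometry near $0$ via the growth condition $(f_1)$ and the (compact) embedding into $L_K^{q_1}+L_K^{q_2}$, obtain the unbounded direction from the Ambrosetti--Rabinowitz bound $F(t)\geq c_1 t^{\theta}-c_2$ tested on a smooth radial bump supported away from the origin, and conclude with the Palais--Smale condition already established in Theorem \ref{theorPS}. The only differences are presentational: you delegate the estimate $\int K\tilde F(u)\,dx\leq C(\|u\|^{q_1}+\|u\|^{q_2})$ to the sum-space results of \cite{BPR} where the paper re-derives it by splitting $\mathbb{R}^N$ into $B_{R_1}$, $B_{R_2}\setminus B_{R_1}$ and $B_{R_2}^c$, and you make explicit the $u^-$ test for nonnegativity, which the paper leaves implicit.
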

\begin{proof}

Firstly, to have non negative solution, we assume as usual $f(t)=0 $ for $t\leq 0$. To prove the theorem we apply the standard Mountain Pass Lemma. We have proven that $I$ satisfies the Palais-Smale condition, so it is enough to prove that it has the usual Mountain Pass geometry, that is, we have to prove the following two conditions:
\begin{itemize}

\item[{$(i )$}] There are $\rho , \alpha >0$ such that $I(u) \geq \alpha$ for all $||u||= \rho$.

\item[{$(ii )$}]There is $v \in X$ such that $||v|| \geq \rho$ and $I(u) \leq 0$.

\end{itemize}

\noindent As for $(i)$, let us take $0<R_1 <R_2 $ such that

$$\mathcal{S}_0 (q_1 ,R_1 ) < +\infty , \quad \mathcal{S}_{\infty} (q_2 , R_2) < +\infty ,$$

\noindent which is possible because $q_i \in \mathcal{I}_{i}$. Then, using the definition of $\mathcal{S}_0, \mathcal{S}_{\infty}$, lemma \ref{A5}  and the embedding of $X$ in $L^2 (B_{R_2} \backslash B_{R_1} )$, we get

$$\int_{ B_{R_1} } K(|x|) |u|^{q_1} \leq  c_1 ||u||^{q_1} , \int_{B_{R_2}^c } K(|x|) |u|^{q_2} \leq c_2 ||u||^{q_2}  , \int_{B_{R_2} \backslash B_{R_1} } K(|x|) |u|^{q_1} \leq c_1 ||u||^{q_i}  .$$

\noindent Hence

$$ \Big|  \int_{\mathbb{R}^N} K(|x|) F(u)\, dx \Big| \leq $$
$$ \int_{B_{R_1}}  K(|x|) F(u)\, dx +  \int_{B_{R_2} \backslash B_{R_1} }  K(|x|) F(u)\, dx +  \int_{{\mathbb{R}^N} \backslash B_{R_2}}  K(|x|) F(u)\, dx \leq$$
$$M  \int_{B_{R_1}}  K(|x|) |u|^{q_1}\, dx + c_1 ||u||^{q_1} + M  \int_{{\mathbb{R}^N} \backslash B_{R_2}}   K(|x|) |u|^{q_2}\, dx
\leq$$
$$  c_3 ||u||^{q_1} + c_4 ||u||^{q_2}, $$ 

\noindent so that

$$I(u) \geq \frac{1}{2} ||u||^2 -  c_3 ||u||^{q_1} - c_4 ||u||^{q_2}, $$ 

\noindent and $(i)$ easily follows. 
\par \noindent To get $(ii)$, we start remarking that, from $(f_2 )$, there is $c>0$ such that, for all $t\geq t_0$, $F(t) \geq c \, t^{\theta}$. The potential $K$ is not zero a.e., and from this fact it is easy to deduce that there are $\delta >0$ and a measurable subset $A_{\delta} \subset (\delta , 1/\delta )$ such that $|A_{\delta}| > \delta $ and $K(r) > \delta $ in $A_{\delta}$. Now take a function $\varphi \in 
C^{\infty} (\mathbb{R})$ such that $0\leq \varphi (r)\leq 1$ for all $r$, $\varphi (r)=1 $ for $r \in (\delta , 1/\delta )$, $\varphi (r)=0 $ for $r \leq 
\frac{1}{2\delta}$ and $r \geq 1+1/\delta$. Define now, for $x \in \mathbb{R}$, $\psi (x) = \varphi (|x|)$. As $\varphi \in C_{c}^{\infty}(\mathbb{R}\backslash \{ 0 \})$, it is $ \psi \in C_{c}^{\infty}(\mathbb{R}^N \backslash \{ 0 \})$, and furthermore $\psi $ is radial, so $\psi \in X$. Define $\Omega_{\delta}= \{ x \in  \mathbb{R}^N \, | \, |x| \in A_{\delta} \}$. Hence, if we take $\lambda > t_0$ we get 

$$K(|x|) F( \lambda \psi (x) ) \geq \delta \, c \lambda^{\theta} \quad \mbox{in} \,\, \Omega_{\delta},$$

\noindent and $K(|x|) F( \lambda \psi (x) )\geq 0$ for all $x$, so that 

$$\int_{\mathbb{R}^N}K(|x|) F( \lambda \psi (x) ) \geq\int_{\Omega_{\delta}}K(|x|) F( \lambda \psi (x) )\geq c \delta \, \lambda^{\theta} |\Omega_{\delta}|= C_{\delta} \lambda^{\theta} ,$$

\noindent where $C_{\delta} >0$ depends only on $\delta $ and $N$. We then get

$$I(\lambda \psi)\leq  \lambda^2 ||\psi||^2 -C_{\delta} \lambda^{\theta},$$

\noindent so $I(\lambda \psi) \rightarrow -\infty$ as $\lambda \rightarrow + \infty$, and this gives the result.

\end{proof}

\bigskip

\noindent As $I$ satisfies the Palais-Smale condition, arguing as in the proof of Theorem 1.2 in \cite{BPR}, we also get a result of existence of infinity solutions.

\bigskip

\begin{thm}
\label{theorboh4}
Assume the hypotheses of Theorem \ref{thmcomp}. Assume $(f_1 ), (f_2 )$ with $q_i \in \mathcal{I}_{i}$, where the intervals $\mathcal{I}_{i}$ are given in Theorem \ref{thmcomp}.  Assume furthermore the following two assumptions

\begin{itemize}

\item[{$(f_3 )$}] There exists $m>0$ such that $F(t) \geq m \, \min \{ t^{q_1 },t^{q_2 }   \}$ for all $t>0$.

\item[{$(f_4)$}] $f$ is an odd function.

\end{itemize}

Then $I : X \rightarrow \mathbb{R}$ has a sequence $\{  u_n \}_n$ of critical points such that $I(u_n ) \rightarrow + \infty$.

\end{thm}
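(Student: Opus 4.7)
The plan is to invoke the symmetric mountain pass theorem producing an unbounded sequence of critical values. Three of the four required ingredients are already in hand: by $(f_{4})$ the function $f$ is odd so $F$ is even and $I$ is even, with $I(0)=0$; $I\in C^{1}(X,\mathbb{R})$ by Theorem \ref{theorboh1}; and $I$ satisfies the Palais--Smale condition by Theorem \ref{theorPS}. Moreover, the local geometry at the origin, i.e.\ the existence of $\rho,\alpha>0$ with $I(u)\geq\alpha$ for $\|u\|=\rho$, has already been proved in step $(i)$ of the proof of Theorem \ref{theorboh3} and carries over verbatim. What remains is therefore the ``symmetric'' geometric condition: for every finite-dimensional subspace $W\subseteq X$ there is $R_{W}>0$ such that $I\leq 0$ on $W\setminus B_{R_{W}}$.

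\textbf{Control on finite-dimensional subspaces.} Here is where $(f_{3})$ enters. Assume without loss of generality $q_{1}\leq q_{2}$. For $\lambda\geq 1$ and any $v\in X$, a pointwise check gives
\[
\min\{\lambda^{q_{1}}|v|^{q_{1}},\lambda^{q_{2}}|v|^{q_{2}}\}\geq\lambda^{q_{1}}\min\{|v|^{q_{1}},|v|^{q_{2}}\},
\]
so $(f_{3})$ yields
\[
\int_{\mathbb{R}^{N}}K(|x|)F(\lambda v)\,dx\geq m\,\lambda^{q_{1}}\,N(v),\qquad N(v):=\int_{\mathbb{R}^{N}}K(|x|)\min\{|v|^{q_{1}},|v|^{q_{2}}\}\,dx.
\]
The functional $N:X\to[0,+\infty)$ is continuous: the continuous embedding $X\hookrightarrow L_{K}^{q_{1}}+L_{K}^{q_{2}}$ from Theorem \ref{thmcomp}, together with the modular characterization of the sum-space norm (Propositions 2.3 and 2.7 of \cite{BPR}), shows that if $v_{n}\to v$ in $X$ then the above integrals converge. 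Moreover $N(v)>0$ whenever $v\neq 0$ in $X$: indeed $K>0$ a.e., and if $N(v)=0$ then $v=0$ a.e., forcing $\|v\|_{X}=0$. Hence on the (compact) unit sphere $S_{W}$ of any finite-dimensional subspace $W\subseteq X$ the positive continuous function $N$ attains a strictly positive minimum $m_{W}:=\min_{v\in S_{W}}N(v)>0$. Writing $u=\lambda v$ with $v\in S_{W}$ and $\lambda\geq 1$ we obtain
\[
I(\lambda v)\leq\frac{\lambda^{2}}{2}-m\,m_{W}\,\lambda^{q_{1}},
\]
and since $q_{1}>2$ by $(f_{1})$ the right-hand side tends to $-\infty$ uniformly in $v\in S_{W}$ as $\lambda\to+\infty$. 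This yields the required $R_{W}$.

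\textbf{Conclusion and main obstacle.} Combining the preceding steps, the Ambrosetti--Rabinowitz symmetric mountain pass theorem (applied exactly as in the proof of Theorem 1.2 of \cite{BPR}) provides an unbounded sequence $c_{n}\to+\infty$ of critical values, and hence critical points $u_{n}\in X$ with $I(u_{n})\to+\infty$. The only non-routine point in the argument is the finite-dimensional bound: the nonlinearity is not a pure power, so one cannot simply read off ``$\int KF(u)\asymp\|u\|^{q}$''. The device used above --- passing through the auxiliary functional $N$ and exploiting the compactness of $S_{W}$ to bound it below by a positive constant --- is precisely what replaces the pure-power scaling argument and makes the min-type nonlinearity $\min\{t^{q_{1}-1},t^{q_{2}-1}\}$ fit into the standard multiplicity scheme.
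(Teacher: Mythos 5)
Your proposal is correct and takes the same route the paper intends: the paper's ``proof'' is only the one-line remark that one argues as in Theorem 1.2 of \cite{BPR}, i.e.\ via the symmetric mountain pass theorem, using evenness from $(f_4)$, the $C^1$ regularity and Palais--Smale property already established, and the mountain-pass geometry at the origin from Theorem \ref{theorboh3}. Your explicit treatment of the finite-dimensional condition through the modular functional $N(v)=\int_{\mathbb{R}^N}K(|x|)\min\{|v|^{q_1},|v|^{q_2}\}\,dx$ (where lower semicontinuity along $X$-convergent sequences, obtainable from a.e.\ convergence of subsequences and Fatou, already suffices to get $\inf_{S_W}N>0$) is exactly the detail the paper leaves to the reader, and it is carried out correctly.
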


\bigskip
\bigskip

\section{Examples}

\noindent In this section we give some examples that could help to understand what is new (and what is not) in our results. We will make a comparison, in some concrete cases, between our results and those of \cite{Su-Wang-Will-q}. In this paper the authors study a p-laplacian equation, so we compare their results with ours only in the case $p=2$. Our problem is also linked to those studied in \cite{ChenChen-X} and \cite{Huang-X}, but in the following examples we assume $A(r)= \min\{r^{\alpha}, r^{\beta}  \}$, with $\alpha \not= \beta$, and this rules out the results of \cite{ChenChen-X} and \cite{Huang-X}, in which it is $A(r)=r^{\alpha}$, for some $\alpha \in \mathbb{R}$. In \cite{Su-Wang-Will-q}, the authors define three functions $q^* , q_{*}, q_{**}$ which depends on the asymptotic behavior of the potentials, and find existence of solutions for, say, $f(t)= t^{q_1 -1}+t^{q_2 -1}$ when $q_i \in ( q_{*} , q^* )$ or when $q_i > \max \{q_{*}, q_{**} \}$ $(i=1,2)$.

\bigskip

{\bf 1.}  Let us choose the functions $A, V, K$ as follows:

$$A(r)= \min \{ r^2 , r^{3/2} \}; \quad V(r)= \min \left\{ 1, \frac{1}{r^{1/2}}  \right\}; \quad K(r)= \max \{ r^{1/2}, r^{3/2}  \}.$$

\noindent It is simple to verify that in this case the results of \cite{Su-Wang-Will-q} do not apply, because if we compute the functions $q^*$ and $q_*$ it happens $ q_*  = \frac{4N +6}{2N-1}$ and $q^* = \frac{2N+1}{N}$ (and $q_{**}$ is not defined), so that $q^* < q_*$ while $q^* > q_*$ is a needed hypothesis. To apply our results, we can choose $\beta_0 =\beta_{\infty}=0$, $\alpha_0 = 1/2$, 
$\alpha_{\infty}= 3/2$, $a_0= 2$, $a_{\infty} =3/2$. We then get $q^* (a_0, \alpha_0 , \beta_0 ) = \frac{2N+1}{N}>2$ and 
$q^* (a_{\infty}, \alpha_{\infty} , \beta_{\infty} ) = \frac{4N +6}{2N-1}$. Hence, if we choose

$$2<q_1 <\frac{2N+1}{N} <\frac{4N +6}{2N-1} <q_2 ,$$

\noindent and $f(t)= \min \{ t^{q_1 -1 },  t^{q_2 -1 }   \}$, we can apply our existence results. Notice that in this case $\mathcal{I}_{1} \cap \mathcal{I}_{2} = \emptyset$.

\bigskip

{\bf 2.}  Assume $N\geq 6$ and choose the functions $A, V, K$ as follows:

$$A(r)= \max \left\{ \frac{1}{r^2}, \frac{1}{r^3} \right\} ; \quad V(r)= \frac{1}{r^{4}} ; \quad K(r)= \min \left\{ 1, \frac{1}{r^{2}}   \right\}.$$

\noindent In this case, following \cite{Su-Wang-Will-q}, the computations give $q^* = \frac{2N}{N-5}$ and $q_* =\frac{2(N-2)}{N-4}$, and it is $q_* < q^*$ for $N\geq 3$, so in this case they get existence for $q_i \in \left(\frac{2(N-2)}{N-4}, \frac{2N}{N-5}\right)$. To apply our results, we can choose $\beta_0 =\beta_{\infty}= \alpha_0 = 0$, 
$\alpha_{\infty}= -2$, $a_0= -3$ and $a_{\infty} =-2$. We then get $q^* (a_0, \alpha_0 , \beta_0 ) = \frac{2N}{N-5}$ and 
$q^* (a_{\infty}, \alpha_{\infty} , \beta_{\infty} ) =\frac{2(N-2)}{N-4}$, so ${\cal{I}}_1 \cap {\cal{I}}_2 = \left(\frac{2(N-2)}{N-4}, \frac{2N}{N-5}\right)$, the same interval. Hence, for pure power functions we obtain exactly the same result as in \cite{Su-Wang-Will-q}, while we can not treat functions like $f(t)= t^{q_1 -1}+t^{q_2 -1}$. On the other hand, we are free to choose $2<q_1 < \frac{2(N-2)}{N-4}<q_2$ and $f(t)= \min \{ t^{q_1 -1}, t^{q_2 -1}  \}$ and such a function does not satisfy the hypotheses of \cite{Su-Wang-Will-q}, because it satisfies $(f_2)$ with $\theta = q_1 < \frac{2(N-2)}{N-4}$, which is not allowed in \cite{Su-Wang-Will-q}.

\bigskip

{\bf 3.}  Finally, assume again $N\geq 6$ and choose the functions $A, V, K$ as follows:

$$A(r)=  \max \left\{ \frac{1}{r^2}, \frac{1}{r^3} \right\} ; \quad V(r)= e^{2r} ; \quad K(r)= e^r  .$$

\noindent In this case the results of \cite{Su-Wang-Will-q} do not apply because of the exponential growth of the potential $K$. We can choose $a_0= -3 $, $a_{\infty} =-2$, 
$\beta_0 =\alpha_{0}= \alpha_{\infty}=0$, $\beta_{\infty}=\frac{1}{2}$, and we get, as before, $q^* (a_0, \alpha_0 , \beta_0 ) = \frac{2N}{N-5}$ and $q^* (a_{\infty}, \alpha_{\infty} , \beta_{\infty} ) =\frac{2(N-2)}{N-4}$, so again we get existence of solution for functions like $f(t)= \min \{ t^{q_1 -1}, t^{q_2 -1}  \}$ and for the same range of exponents $q_i$ as above. In particular we can choose $f(t)= t^{q-1}$ for $q \in \left(\frac{2(N-2)}{N-4}, \frac{2N}{N-5}\right)$.

\bigskip
\bigskip

\end{document}